\providecommand{\norm}[1]{\lVert#1\rVert}
\newtheorem{theorem}{Theorem}
\newtheorem{lem}{Lemma}
\newtheorem{remark}{Remark}
\numberwithin{equation}{section}
\newcommand{\R}{\mathbb R}
\begin{document}
	\noindent \textsc{\Large\bf Nonparametric relative error estimation of the regression function for censored data}\\
	
\noindent  \textsc{\bf {BOUHADJERA Feriel. $^{1,\, 2}$}} \\
\noindent  {\it $^{1}$ Universit\'e  Badji-Mokhtar, Lab. de Probabilit\'es  et  Statistique.  BP 12, 23000 Annaba, Alg\'erie.}\\
 \noindent \textsc{\bf  OULD  SA\"ID Elias.} {\sf  Corresponding author. }\\
\noindent {\it $^{2}$  Universit\'e du  Littoral C\^ote d'Opale. Lab. de Math. Pures et Appliqu\'ees. IUT de Calais. 19, rue Louis David. Calais, 62228, France.}\\
 \noindent \textsc{\bf   REMITA  Mohamed Riad.} \\
\noindent   {\it Universit\'e  Badji-Mokhtar, Lab. de Probabilit\'es  et  Statistique.  BP 12, 23000 Annaba, Alg\'erie.}\\


\vspace*{.61cm}
\noindent{ \normalsize{\bf  ABSTRACT. Let $\bf (T_i)_{i }$ be a sequence of independent identically distributed  (i.i.d.) random variables (r.v.) of interest  distributed as $\bf T$ and $\bf(X_i)_{ i }$ be a corresponding vector of covariates taking values on $\bf \mathbb{R}^d$. In censorship models the r.v. $\bf T$ is subject to random censoring by another r.v. $\bf C$. In this paper we built  a new kernel estimator based on the so-called synthetic data of the mean squared relative error for the regression function. We establish the  uniform almost sure convergence with rate over a compact set and its asymptotic normality. The asymptotic variance is explicitly given and as product we give a confidence bands. A simulation study has been conducted to comfort our theoretical results}.}\\
\vspace*{-.8cm}

\noindent {\it{Keywords}}:  Asymptotic normality.   Censored data.  Kernel estimate. Relative regression error.  Uniform almost sure convergence. V-C classes.\\
\section{Introduction}
Let $\displaystyle  (X_{i},T_{i})_{ i }$ be a $\mathbb{R}^d \times \mathbb{R}^*_+, (d\geq 1)$ valued sequence of random vectors that we assume drawn from the pair $(X,T)$ which is defined on a probability space $(\Omega, \mathcal{F},\mathbb{P})$.
The purpose of this work is to study the effect of a random covariable $X$ on a r.v. $T$ which is subject to right censoring by another r.v. $C$. This relation of regression is modeled by: 
\begin{equation}\label{model}
\displaystyle T=r(X)+\epsilon,
\end{equation}
where $r(\cdot)$ is the regression function and $\epsilon$ a sequence of error independent to $X$. Usually, $\displaystyle  r(\cdot)=\mathbb{E}[T|X]$ is estimated by minimizing the mean squared loss function $\displaystyle  \mathbb{E}[(T-r(X))^{2} \big| X]$. However, this loss function is based on some restrictive conditions that is the variance of the residual is the same for all the observations, which is inadequate when the data contains some outliers. Therefore, in order to overcome this drawback we consider an alternative approach allow to construct an efficient predictor even if the data is affected by the presence of outliers. So, in this paper the limitations of the classical regression are counteracted by estimating the regression function with respect to the minimization of the following mean squared relative error, for $T>0$,
\begin{equation}\label{reg_rel}
\displaystyle  \mathbb{E}\Big[\left(  \frac{ T-r(X) }{T}\right)^{2} \big| X\Big].
\end{equation}
The latter is a more meaningful measure of performance of a predictor than the usual error in the presence of outliers. 
It is easy  to see   that the  solution of the minimization  problem of  (\ref{reg_rel})  is given by 
 \begin{equation}\label{regrelative}
  r(X)= \frac{\mathbb{E}\big[T^{-1}|X\big]}{\mathbb{E}\big[T^{-2}|X\big]}.
  \end{equation}
 \hyperref[park1]{Park and Stefanski (1998)}  \, have shown  
that the solution given by (\ref{regrelative})  satisfies
\begin{equation}\label{ineg}
\displaystyle \frac{\mathbb{E}\big[T^{-1}|X\big]}{\mathbb{E}\big[T^{-2}|X\big]} \leq \mathbb{E}\big[T \big| X \big] \quad \quad \text{a.s.}
\end{equation}
\noindent provided that the first two conditional inverse moments are finite. The authors consider parametric approaches to estimate  the regression function $r(\cdot)$ which focused on estimating the mean and variance functions modeling methods (\hyperref[carroll]{Carroll and Ruppert, 1988}) of the inverse response $T^{-1}$ as function of $X$. Without claiming to be exhaustive, we can quote \hyperref[narula]{Narula and Wellington (1977)} who studied an estimation method for minimizing the sum of absolute relatives residuals. \hyperref[farum]{Farum (1990)} developed an estimation method designed to reduce absolute relative error. \hyperref[khoshgoftaar]{Khoshgoftaar {\it et al}. (1992)} studied the asymptotic properties of the estimators minimizing the sum of squared relative errors. In this contribution, we focus on nonparametric approach. To the best of our knowledge, only the paper of \hyperref[park]{Park {\it et al.} (2008)} study the nonparametric regression using the relative error as loss function. They studied the asymptotic properties of an estimator minimizing the sum of squared relative errors by applying local linear approach.  \\
In many estimation problems, it is not always possible, to make complete measurements when the available sample data is incomplete in the sense that measures are not available for all members of a random sample. For example, in medical follow-up studies, it often happens for various reasons, that the duration of interest can not be observed. This may be due to the loss of view of the patient at the beginning or end of the study period. These values are censored. The censored values, although unknown, must be taken into account to obtain a correct estimate and precise conclusions. For such practical observations, conventional statistical procedures are no longer valid and more elaborate techniques are used to model such observations.\\
One of the classical cases for incomplete data is the right-censored data. In this case, we observe another r.v. $C$ with continuous distribution function (d.f.) $G$, we can only observe a sample $(X_i, Y_i, \delta_i)$ where $Y_i=T_i \wedge C_i$ and $\delta_i={\mathds{1}}_{\{T_i \leq C_i\}}$, for $i=1,\dots,n$, with $\wedge$ denotes the minimum and $\mathds{1}_A$ is the indicator function of the event A.\\
When we talk about censored data, several authors like \hyperref[carbonez]{Carbonez {\it et al.} (1995)}, \hyperref[kohler]{Kohler {\it et al.} (2002)}, \hyperref[delacroix]{Delecroix {\it et al.} (2008)} and \hyperref[guessoum]{Guessoum and Ould Said (2008)} uses the synthetic data that take into account the effect of censorship on the distribution. For that we consider the sample $(Y_i,\delta_i)_i$, for  $1\leq i \leq n$ and we put :
\begin{equation}\label{calsynt}
\displaystyle  T_{i}^{\ast,-\ell}=\frac{\delta_i Y_{i}^{-\ell}}{\overline{G}(Y_i)}  ,  \quad  \quad\ell=1,2    
\end{equation}
where $\overline{G}$ is the survival function of the censoring rv $C$.\\ All along this paper, we suppose that: 
\begin{equation}\label{indep}
\displaystyle  (T_i, X_i)_i \;\text{and} \;(C_i)_i \;\text{are independent for} \; 1\leq i \leq n.
\end{equation}
Then from the equation (\ref{calsynt}) and the hypothesis (\ref{indep}), we get, 
\begin{equation*}
\begin{aligned}
\displaystyle \mathbb{E}[T_{1}^{\ast,-\ell}|X_1]&= \mathbb{E}\left[\frac{\delta_1 Y_{1}^{-\ell}}{\overline{G}(Y_1)}|X_1\right]\\
&\displaystyle  =\mathbb{E}\left\{\mathbb{E}\left[\frac{\delta_1 Y_{1}^{-\ell}}{\overline{G}(Y_1)}|T_1,X_1\right]|X_1\right\}\\
&\displaystyle  =\mathbb{E}  \left\{\frac{T_{1}^{-\ell}}{\overline{G}(T_1)} \mathbb{E} \left[ \mathds{1}_{\{T_1 \leq C_1 \}} |T_1 \right] |X_1\right\}\\
&\displaystyle  =\mathbb{E}[T_{1}^{-\ell}|X_1].
\end{aligned}
\end{equation*}
This paper offers then an alternative approach to traditional estimation models by considering the minimization of the least relative error for regressions models when the data are randomly right censored. We establish the strong and uniform consistencie (with rate) of the constructed estimator and then the asymptotic normality has been shown. At the best of our acknowledge there is no result concerning the nonparametric regression function  for censoring data  using the relative error.   \\
The rest of the paper is organized as follows: Section \ref{Sect 2} is devoted to the presentation of the new estimator of the mean squared relative error of the regression function. The assumptions and main results are given in Section \ref{Sect 3}. Simulations are drawn in Section \ref{Sect 4}. Finally, the proofs are relegated to Section \ref{Sect 5} with some auxiliary results. 
\section{Definition of the new estimator}\label{Sect 2}
Let $(T_i)_{1\leq i \leq n}$ be an i.i.d. $n$-sample of r.v. of interest with commun unknown continuous d.f. $F$ and let $(X_i)_{1\leq i \leq n}$ be a corresponding vector of covariates with joint density function $f(\cdot)$. 
As mentioned before 
 the solution of (\ref{reg_rel})  is given by 
\begin{equation}\label{P-S}
\displaystyle  r(x)=\frac{\mathbb{E}\big[T^{-1}|X=x\big]}{\mathbb{E}\big[T^{-2}|X=x\big]}:=\frac{\bar{r}_1(x)}{\bar{r}_2(x)},    
\end{equation}
with $\displaystyle r_{\ell}(\cdot)= \displaystyle \frac{\bar{r}_{\ell}(\cdot)}{f(\cdot)}$ where $\displaystyle \bar{r}_{\ell}(x)= \displaystyle \int_{\R^*_+} t^{-\ell}f_{T,X}(t,x)dt$ \; for \; $\ell=1,2$.\\ 
Recall that, in the case of complete data, a well-known Nadaraya Watson (N-W) estimator of $r(\cdot)$ is given by 
\[\displaystyle{r^*(x)=\sum_{i=1}^{n} T_{i} W_{i,n}(x)}\]
with
\[\displaystyle 
W_{i,n}(x)=
\left\{
\begin{array}{cl}
\displaystyle\frac{K\left(\frac{x-X_i}{h_n}\right)}{\sum_{i=1}^{n}K\left(\frac{x-X_i}{h_n}\right)}, & \quad \text{if} \quad \sum_{i=1}^{n}K\left(\frac{x-X_i}{h_n}\right) \neq 0; \\
1/n, & \quad \text{otherwise},
\end{array}
\right.
\]
 where $h_n$ is a sequence of positive real numbers (bandwidth) that decreases to zero when $n$ goes to infinity and $K$ is a kernel function defined in $\mathbb{R}^d$.
Thus, a natural estimator of (\ref{P-S}) is given by
\begin{equation}
\displaystyle  {r}_{n}(x)=\frac{\displaystyle{\sum_{i=1}^{n}T_{i}^{-1}K\left(\frac{x-X_i}{h_n}\right)}}{\displaystyle{\sum_{i=1}^{n}T_{i}^{-2}K\left(\frac{x-X_i}{h_n}\right)}},   
\end{equation}
this is the  analogous  N-W estimator which is nothing other than a special case of the censored case.\\
As mentioned before, when   the r.v. $T$ is subject to right censoring by another r.v. $C$,  we define $\tilde{r}_{n}(x)$ as a "pseudo-estimator" of $r(x)$ that is, for any $ \displaystyle x \in \mathbb{R}$, we have,
\begin{equation}\label{rntilde}
\displaystyle  \tilde{r}_{n}(x)=\frac{\displaystyle{\sum_{i=1}^{n}\frac{\delta_{i}Y_{i}^{-1}}{\overline{G}(Y_i)}K\left(\frac{x-X_i}{h_n}\right)}}{\displaystyle{\sum_{i=1}^{n}\frac{\delta_{i}Y_{i}^{-2}}{\overline{G}(Y_i)}K\left(\frac{x-X_i}{h_n}\right)}}=:\frac{\bar{\tilde{r}}_{1,n}(x)}{\bar{\tilde{r}}_{2,n}(x)}.    
\end{equation}
The latter can not be calculated as $\overline{G}$ is unknown. Then to define a genuine estimator of $r(\cdot)$, we replace $\overline{G}$ by its Kaplan-Meier (1958) estimator which is defined by
\begin{equation}\label{K-M}
\overline{G}_n(t)=
\left\{
\begin{array}{cl}
\displaystyle{\prod_{i=1}^{n}{\left(1-\frac{1-\delta_{i}}{n-i+1}\right)}^{\mathds{1}_{\{Y_{i}\leq t\}}}} & \quad \text{if} \quad t<Y_{(n)}, \\
0 & \quad \text{otherwise},
\end{array}
\right.
\end{equation}
where $Y_{(1)}\leq Y_{(2)}\leq \dots \leq Y_{(n)}$ are the order statistics of the $Y_{i}$ and $\delta_{i}$ is the indicator of non-censoring. The properties of $\bar{G}_n(t)$ have been studied by many authors. 
So a calculable estimator of $r(\cdot)$ is given by 
\begin{equation}\label{estimRRC}
\displaystyle  r_{n}(x)=\frac{\displaystyle{\sum_{i=1}^{n}\frac{\delta_{i}Y_{i}^{-1}}{\overline{G}_{n}(Y_i)}K\left(\frac{x-X_i}{h_n}\right)}}{\displaystyle{\sum_{i=1}^{n}\frac{\delta_{i}Y_{i}^{-2}}{\overline{G}_{n}(Y_i)}K\left(\frac{x-X_i}{h_n}\right)}}=:\frac{r_{1,n}(x)}{r_{2,n}(x)}
\end{equation}
where 
\begin{equation}\label{estimerell}
\displaystyle r_{\ell,n}(x)= \frac{\bar{r}_{\ell,n}(x)}{f_n(x)}=\frac{\displaystyle{\sum_{i=1}^{n} \frac{\delta_i Y_{i}^{-\ell}}{\overline{G}_n (Y_i)}K\left(\frac{x-X_i}{h_n}\right)}}{\displaystyle{\sum_{i=1}^{n} K\left(\frac{x-X_i}{h_n}\right)}},
\end{equation}
for $\ell=1,\; 2$ and $f_n(\cdot )$ is the well-known kernel estimator of the joint density function $f(\cdot)$.

\section{Hypotheses and main results}\label{Sect 3}

\noindent In order to state our results, we introduce some notations. For any d.f. $L$, let $\tau_{L}=sup\{y,{L}(y)<1\}$ be a upper endpoint of ${L}$. Assume that $\tau_{F}<\infty $, $\overline{G}(\tau_{F})>0$. All along the paper, when no confusion is possible, we denote by $M$ any generic strictly positive constant such that $M \geq T^{-\ell}$ and by $r_{\ell }(\cdot )=\mathbb{E}[T^{-\ell}|X=\cdot ]$ the conditional $\ell$-inverse moments of $T$ given $X$ and $\ell=1,\, 2$. Furthermore $\bar{r}_{\ell}(\cdot)=r_{\ell}(\cdot)f(\cdot)$, with $f$ is the density of $X$. On the other hand, $\log_2(\cdot)=\log\log(\cdot)$ denotes the iterated logarithm function. Finally 
 denote ${\mathcal{C}}_{0}=\left\{x \in \mathbb{R} / f(x) >0 \right\}$ the open set  and  ${\mathcal{C}}$ be a compact subset of  ${\mathcal{C}}_{0}$.\\
We will make use of the following hypotheses.
\begin{itemize}
	\item[\bf{\text{H.}}]\label{h}The bandwidth $h_n$ satisfies: 
	\begin{itemize}
		\item[$i)$] $\lim_{n_\rightarrow \infty} h_n=0, \quad \lim_{n_\rightarrow \infty} n h_n=+\infty,\quad \lim_{n\rightarrow\infty} \frac{\log n}{nh_n}=0$
		\item[$ii)$]  $h_n\log_2 n =o(1)$.
		\item[$iii)$]$\lim_{n\rightarrow\infty}nh_{n}^{5}=0$.
	\end{itemize}
	\item[\bf{\text{K.}}]\label{k} The kernel $K(\cdot)$ is: 
	\begin{itemize}
		\item[$i)$] Continuously differentiable compactly supported density function.
		\item[$ii)$]  $\int_{\mathbb{R}} |t|K(t)dt=0$ and $\int_{\mathbb{R}} t^2 K(t) dt < \infty$ holds.
		\item[$iii)$] $\int_{\mathbb{R}} t K^2(t) dt < \infty$
	\end{itemize}
	\item[\bf{\text{D.}}]\label{d}
	\begin{itemize}
		\item[$i)$] The function $\bar{r}_{\ell}(\cdot)$, for $\ell=1,2$, is twice continuously differentiable and \\ $\sup_{x \in \mathcal{C}}|\bar{r}^{''}_{\ell}(x)|<{+\infty}$.
		\item[$ii)$] The function $\Upsilon_\ell(\cdot)$, for $\ell=2,3,4$, is continuously differentiable and \\ $\sup_{x \in \mathcal{C}}{\Upsilon_\ell}^{'}(x)<{+\infty}$.
		\item[$iii)$] There exists $\Gamma>0$ such that  $\bar{r}_{2}(x)>\Gamma$ for all $x \in \mathcal{C}.$
	\end{itemize}
\end{itemize} 
\subsection{Discussions on the hypotheses}
\begin{enumerate}
	{\it
		\item The independence assumption between $(C_i)_i$ and $(T_i,X_i)_i$ may seem to
		be strong and one can think of replacing it by a classical conditional independence
		assumption between $(C_i)_i$ and $(T_i)_i$ given $(X_i)_i$. 
		However in the conditionally hypothesis we propose  the
		following estimator for the regression function $r(x)$ where  $\overline{r}_{\ell,n}$ for $\ell=1, 2$ are given by
		\begin{equation}\label{regbar}
		 \overline{r}_{\ell,n}(x) = {\displaystyle{\sum_{i=1}^{n} \frac{\delta_i Y_{i}^{-\ell}}{\overline{G}_n (Y_i \big| X_i)}K\left(\frac{x-X_i}{h_n}\right)}} 
		 \end{equation}
		where $\overline{G}_n (Y_i \big| X_i)$ is  Beran's estimator of the survival conditional distribution of the
		censored r.v. $C$ given $X$.  Then we get  an analogous estimator as in   (\ref{estimRRC}) using (\ref{regbar}).
		As mentioned before and as far as we know there is  no rate of convergence for this estimate 
		as in the unconditional case (see Deheuvels and Einmahl, 2000). We think that  this issue has to be addressed if we aim 
		to get rates of convergence.
		Moreover our framework is classical and was considered by Carbonez {\it et al.} (1995) and
		Kohler {\it et al.} (2002) among others. Note finally that this assumption implies
		the independence between $(C_i)_i$ and $(T_i)_i$ which ensures the identifiability of
		the model. \item The hypothesis $\tau_F<\tau_G$ is classical for asymptotic normality results
		in the censorship framework. It implies that $\overline{G}(T)\geq\overline{G}(\tau_F)>0\;a.s.$
		\item The Hypotheses \hyperref[h]{\bf{\text{H}}} \textit{i)} and \hyperref[k]{\bf{\text{K}}} concern the smoothing parameter $h_n$ and the kernel $K(.)$ and are standard in nonparametric regression estimation for complete or incomplete data. Moreover, \hyperref[d]{\bf{\text{D}}} \textit{i)} is needed to study the bias term. On another side, hypothesis \hyperref[d]{\bf{\text{D}}} \textit{iii)} is used to state the uniform consistency of the constructed estimator.  Finally, hypotheses \hyperref[h]{\bf{\text{H}}} \textit{ii)}, \textit{iii)} and \hyperref[d]{\bf{\textit{D}}} \textit{ii)} are needed for get asymptotic normality.
	}
\end{enumerate}
\subsection{Results}
We can now present our results. The proofs of these are established in Section \ref{Sect 5}.
We first state a uniform consistency result with rate for $r_{n}(\cdot)$.
\begin{theorem} \label{theo1}
	Under hypotheses \hyperref[h]{\bf{\textit{H}}} \textit{i)}, \hyperref[k]{\bf{\textit{K}}} and \hyperref[d]{\bf{\textit{D}}} \textit{i)}, \textit{iii)}, we have: 
	\[\sup_{x \in \mathcal{C}} \mid {r_{n}(x)-r(x)} \mid= {O}_{a.s.}\left\{\max \left(\left( \frac {\log n }{nh_{n}^{2}}\right)^{1/2},h_{n}^{2}\right)\right\} \quad as \quad n \longrightarrow \infty. \]
\end{theorem}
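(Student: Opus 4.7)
The plan is to exploit the ratio structure of $r_n$ and reduce to a uniform rate for the (censored) numerator and denominator separately, then handle the unknown $\overline{G}$ by inserting the pseudo-estimator of (\ref{rntilde}) and controlling Kaplan--Meier deviation.

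I would start from the identity
\[
r_n(x)-r(x)=\frac{1}{r_{2,n}(x)}\Big[(r_{1,n}(x)-r_1(x))-r(x)(r_{2,n}(x)-r_2(x))\Big].
\]
Hypothesis \textbf{D} \textit{iii)} gives $\bar r_2\ge\Gamma>0$ on $\mathcal C$, and since $f$ is continuous and strictly positive on the compact $\mathcal C\subset\mathcal C_0$, it is bounded below there, so $r_2=\bar r_2/f$ is bounded below on $\mathcal C$. Once I establish a uniform rate for $r_{2,n}-r_2$, a routine perturbation argument shows $1/r_{2,n}$ is a.s.\ eventually bounded above on $\mathcal C$, and the problem reduces to proving, for $\ell=1,2$,
\[
\sup_{x\in\mathcal C}|r_{\ell,n}(x)-r_\ell(x)|=O_{a.s.}\!\Big(\max\!\big((\log n/nh_n^{2})^{1/2},h_n^{2}\big)\Big).
\]
Writing $r_{\ell,n}=\bar r_{\ell,n}/f_n$ and $r_\ell=\bar r_\ell/f$, a further ratio decomposition reduces everything to the same uniform rate for $\bar r_{\ell,n}-\bar r_\ell$ and the (classical) rate for $f_n-f$.

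Next I would neutralize the unknown $\overline G$ by inserting the pseudo-estimator $\bar{\tilde r}_{\ell,n}$ built on (\ref{calsynt}):
\[
\bar r_{\ell,n}(x)-\bar r_\ell(x)=\big[\bar r_{\ell,n}(x)-\bar{\tilde r}_{\ell,n}(x)\big]+\big[\bar{\tilde r}_{\ell,n}(x)-\bar r_\ell(x)\big].
\]
Since $\tau_F<\tau_G$ implies $\overline G(Y_i)\ge\overline G(\tau_F)>0$ on the observed range, and $Y_i^{-\ell}\le M$, the first bracket is bounded by $C\,\sup_{t\le\tau_F}|\overline G_n(t)-\overline G(t)|\cdot f_n(x)$. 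The Deheuvels--Einmahl (2000) law of the iterated logarithm for Kaplan--Meier gives $\sup_{t\le\tau_F}|\overline G_n-\overline G|=O_{a.s.}(\sqrt{\log_2 n/n})$, which under \textbf{H} \textit{i)} is $o((\log n/nh_n^{2})^{1/2})$, hence negligible.

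For the pseudo-term I would split bias from variance:
\[
\bar{\tilde r}_{\ell,n}(x)-\bar r_\ell(x)=\{\bar{\tilde r}_{\ell,n}(x)-\mathbb E\bar{\tilde r}_{\ell,n}(x)\}+\{\mathbb E\bar{\tilde r}_{\ell,n}(x)-\bar r_\ell(x)\}.
\]
Using the identity $\mathbb E[T_1^{\ast,-\ell}\mid X_1]=\mathbb E[T_1^{-\ell}\mid X_1]$ established in the introduction, a change of variable and a second-order Taylor expansion of $\bar r_\ell$ give, by \textbf{D} \textit{i)} and \textbf{K} \textit{ii)},
\[
\sup_{x\in\mathcal C}|\mathbb E\bar{\tilde r}_{\ell,n}(x)-\bar r_\ell(x)|=O(h_n^{2}).
\]
The centered part is the real obstacle: it is the supremum of the empirical process indexed by
\[
\mathcal F_n=\Big\{(y,\delta,u)\mapsto \tfrac{\delta\,y^{-\ell}}{\overline G(y)}K\!\big(\tfrac{x-u}{h_n}\big):x\in\mathcal C\Big\},
\]
which, since $K$ is a fixed kernel of \textbf{K} \textit{i)} type, is a V--C subgraph class with envelope uniformly bounded (using $M\ge T^{-\ell}$ and $\overline G(Y)\ge\overline G(\tau_F)$). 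A Talagrand-type / Einmahl--Mason inequality for V--C classes with the Borel--Cantelli lemma yields
\[
\sup_{x\in\mathcal C}\big|\bar{\tilde r}_{\ell,n}(x)-\mathbb E\bar{\tilde r}_{\ell,n}(x)\big|=O_{a.s.}\big((\log n/nh_n^{2})^{1/2}\big).
\]

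Combining the three estimates for $\bar r_{\ell,n}-\bar r_\ell$ with the analogous (and faster) rate for $f_n-f$, then plugging into the ratio decomposition, produces the announced rate. The one genuinely delicate step is the V--C uniform concentration for the centered pseudo-term; the Kaplan--Meier replacement and the Taylor-expansion bias are routine once that empirical-process bound is in hand.
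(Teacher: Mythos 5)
Your proposal is correct and follows essentially the same route as the paper: the same ratio decomposition with the denominator controlled via \textbf{D}~\textit{iii)}, the same three-way split into a Kaplan--Meier replacement term (handled by the Deheuvels--Einmahl law of the iterated logarithm), a bias term (second-order Taylor expansion under \textbf{K}~\textit{ii)} and \textbf{D}~\textit{i)}), and a centered empirical-process term (V--C class plus a Talagrand-type inequality and Borel--Cantelli). The only cosmetic difference is your detour through $r_{\ell,n}=\bar r_{\ell,n}/f_n$ and the rate for $f_n-f$, which is unnecessary since $f_n$ cancels in the ratio $r_n=\bar r_{1,n}/\bar r_{2,n}$; the paper works directly with the $\bar r_{\ell,n}$.
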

\begin{remark}
It is clear that we can  give the same result in $\mathbb{R}^d, \, d>1$ without difficulties. The proofs are analogous. Therefore, the \hyperref[theo1]{Theorem 1} becomes: 
\[\sup_{x \in \mathcal{C}} \mid {r_{n}(x)-r(x)} \mid= {O}_{a.s.}\left\{\max \left(\left( \frac {\log n }{n h_{n}^{d+1}}\right)^{1/2},h_{n}^{2}\right)\right\} \quad as \quad n \longrightarrow \infty. \] 
\end{remark}

\noindent In what follows we will state the asymptotic normality result. For this, let
\[
\Sigma(x)=
\left(
\begin{array}{cl}
\Upsilon_{2}(x) & \Upsilon_{3}(x) \\
\Upsilon_{3}(x) & \Upsilon_{4}(x)
\end{array}
\right)
\]
be the covariance matrix, with  
\[\Upsilon_{2\ell}(x)=\int \frac{t^{-2\ell}}{\overline{G}(t)}f_{T,X}(t,x)dt \; \; \; \; \; \text{and} \; \; \; \; \; \Upsilon_{3}(x)=\int \frac{t^{-3}}{\overline{G}(t)}f_{T,X}(t,x)dt,  \quad \quad  \text{for} \quad \ell=1,2. \]
Now we are in position to give our asymptotic normality result.
\begin{theorem}\label{theo2}
	Suppose that hypotheses \hyperref[h]{\bf{\textit{H}}}, \hyperref[k]{\bf{\textit{K}}} and \hyperref[d]{\bf{\textit{D}}}\textit{i)}, \textit{ii)} holds. Let ${\cal A}=\Big\{ x\in {\cal C}\;  \textrm{and} \; r_\ell (x) \neq 0, \; \ell=1,\, 2 \; \text{and} \; \Upsilon_j(x) \neq 0,\;j=2,3,4\Big\}$, we have
	\[\sqrt{nh_n}(r_{n}(x)-r(x))\xrightarrow{\  \mathcal{D}    \ } \mathcal{N}(0,\sigma^2(x)) \quad as \quad n \longrightarrow \infty\]
	where 
\[\sigma^2(x)=\kappa\frac{\Upsilon_2(x)\bar{r}_2^2(x)-2\Upsilon_3(x)\bar{r}_1(x)\bar{r}_2(x)+\Upsilon_4(x)\bar{r}_1^2(x)}{\bar{r}_2^4(x)}\]  
	for $\displaystyle \kappa=\int K^{2}(t)dt$ and $\xrightarrow{\  \mathcal{D}    \ }$ denotes the convergence in distribution.
\end{theorem}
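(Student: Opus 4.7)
The plan is to establish asymptotic normality by a three-step reduction: first replace the Kaplan--Meier estimator $\overline{G}_n$ by the true $\overline{G}$ (passing from $r_n$ to the pseudo-estimator $\tilde r_n$ of (\ref{rntilde})); then derive a joint CLT for the normalized numerator and denominator of $\tilde r_n$; and finally combine via the delta method.

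\textbf{Step 1 (Kaplan--Meier replacement).} Define
\[
\hat A_\ell(x)=\frac{1}{nh_n}\sum_{i=1}^{n}\frac{\delta_iY_i^{-\ell}}{\overline{G}(Y_i)}K\!\left(\frac{x-X_i}{h_n}\right),\qquad \ell=1,2,
\]
so $\tilde r_n(x)=\hat A_1(x)/\hat A_2(x)$. The classical uniform LIL for Kaplan--Meier on $[0,\tau_F]$ gives $\sup_{t\le\tau_F}|\overline{G}_n(t)-\overline{G}(t)|=O_{a.s.}(\sqrt{\log_2 n/n})$; combined with $\overline{G}(\tau_F)>0$ and the bound $T^{-\ell}\le M$, a direct manipulation of the ratio yields $|r_n(x)-\tilde r_n(x)|=O_{a.s.}(\sqrt{\log_2 n/n})$ uniformly on $\mathcal C$. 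Hence $\sqrt{nh_n}\,|r_n(x)-\tilde r_n(x)|=O_{a.s.}(\sqrt{h_n\log_2 n})=o_{a.s.}(1)$ by \textbf{H} \textit{ii)}, so it suffices to prove the stated limit for $\tilde r_n(x)$.

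\textbf{Step 2 (Joint CLT for $(\hat A_1,\hat A_2)$).} The conditional-expectation identity derived right after (\ref{indep}) gives $\mathbb{E}\hat A_\ell(x)=\int K(v)\,\bar r_\ell(x-h_nv)\,dv$. A second-order Taylor expansion using $\int vK(v)\,dv=0$ (from \textbf{K} \textit{ii)}) and \textbf{D} \textit{i)} produces $\mathbb{E}\hat A_\ell(x)-\bar r_\ell(x)=O(h_n^2)$; by \textbf{H} \textit{iii)} the scaled bias $\sqrt{nh_n^5}\to 0$ is negligible. For the variance, iterating the same conditioning twice yields
\[
\mathbb{E}\!\left[\frac{\delta_1Y_1^{-2\ell}}{\overline{G}(Y_1)^2}\,\Big|\,X_1\right]=\mathbb{E}\!\left[\frac{T_1^{-2\ell}}{\overline{G}(T_1)}\,\Big|\,X_1\right],
\]
and an analogous identity for the cross product; a change of variable $u=x-h_nv$ then gives $h_n\,\mathrm{Var}(\hat A_\ell(x))\to\kappa\,\Upsilon_{2\ell}(x)$ and $h_n\,\mathrm{Cov}(\hat A_1(x),\hat A_2(x))\to\kappa\,\Upsilon_3(x)$. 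The Lyapunov condition for the triangular array is verified using the compact support of $K$, the bound $Y_i^{-\ell}\le M$, and $\overline{G}(Y_i)\ge\overline{G}(\tau_F)>0$, and the Cram\'er--Wold device produces
\[
\sqrt{nh_n}\bigl(\hat A_1(x)-\bar r_1(x),\;\hat A_2(x)-\bar r_2(x)\bigr)\xrightarrow{\mathcal D}\mathcal N\!\bigl(0,\,\kappa\,\Sigma(x)\bigr).
\]

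\textbf{Step 3 (Delta method and conclusion).} Writing
\[
\sqrt{nh_n}(\tilde r_n(x)-r(x))=\frac{\sqrt{nh_n}}{\hat A_2(x)}\Big[\bigl(\hat A_1(x)-\bar r_1(x)\bigr)-r(x)\bigl(\hat A_2(x)-\bar r_2(x)\bigr)\Big],
\]
and using $\hat A_2(x)\to\bar r_2(x)>0$ a.s.\ (an immediate consequence of the consistency argument underlying Theorem \ref{theo1} together with \textbf{D} \textit{iii)}), Slutsky's lemma combined with Step 2 gives a univariate normal limit of variance $\bar r_2(x)^{-2}\,(1,-r(x))\,\kappa\Sigma(x)\,(1,-r(x))^{\top}$, which rearranges exactly into the stated $\sigma^2(x)$. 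Step 1 then transfers the conclusion from $\tilde r_n$ to $r_n$. The main obstacle lies in Step 2: two nested conditionings are needed to surface the $1/\overline{G}(t)$ factor that appears inside $\Upsilon_{2\ell}$ and $\Upsilon_3$, and the Lyapunov moment bound with weight $\overline{G}^{-2}$ only finite because $\tau_F<\tau_G$; the $h_n$-bookkeeping in the bias and variance expansions must align precisely with \textbf{H} \textit{iii)}, which is what ultimately fixes the admissible bandwidth range.
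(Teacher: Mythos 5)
Your proposal is correct and follows essentially the same route as the paper: Kaplan--Meier replacement controlled by the LIL for $\overline{G}_n$ (the paper's Lemma 1 and the negligibility of $\Lambda_{\ell,n}$), bias of order $h_n^2$ killed by \textbf{H} \textit{iii)} (Lemma 2 and $\Sigma_{\ell,n}$), a joint CLT for the centered numerator and denominator via Cram\'er--Wold with a third-moment (Berry--Ess\'een/Lyapunov) condition, and finally the delta method for $\theta(u,v)=u/v$ at $(\bar r_1(x),\bar r_2(x))$. Your explicit Slutsky linearization in Step 3 is just an unpacked version of the paper's appeal to Mann--Wald's theorem, and your double-conditioning identity for the variance is exactly the paper's computation of $\mathcal{V}_1$; the only cosmetic difference is that you cite \textbf{D} \textit{iii)} where the theorem instead restricts $x$ to the set $\mathcal{A}$ on which $\bar r_2(x)\neq 0$.
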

\subsection{Confidence interval}
The determination of confidence interval requires the estimation of the unknown quantity $\sigma_n(x)$.
A plug-in estimate and using the following estimate of $\Upsilon_{2\ell}(x)$, for $\ell=1,2$, and  $\Upsilon_{3}(x)$ given by
\begin{equation*}\label{upsilon2l}
\displaystyle{ \widehat{\Upsilon}_{2\ell}(x)=\frac{1}{n h_n} \sum_{i=1}^n\frac{Y_i^{-2\ell}}{\overline{G}_n(Y_i)}K\left(\frac{x-X_i}{h_n}\right)}
\; \; \; \; \; \text{and} \; \; \; \; \;
\displaystyle{ \widehat{\Upsilon}_{3}(x)=\frac{1}{n h_n} \sum_{i=1}^n\frac{Y_i^{-3}}{\overline{G}_n(Y_i)}K\left(\frac{x-X_i}{h_n}\right)}
\end{equation*}
respectively and (\ref{estimRRC}) we get a consistent estimate of  $\sigma^2(x)$. This yields a confidence interval of  asymptotic level   $1-\zeta $ for $r(x)$ given by
$$\displaystyle \left[r_n(x)-t_{1-\zeta/2}\frac{\sigma_n(x)}{\sqrt{n h_n}}\; ;\; r_n(x)+t_{1-\zeta/2}{\frac{\sigma_n(x)}{\sqrt{n h_n}}}\right] $$
where $t_{1-\zeta/2}$ denotes the ${1-\zeta/2}$ quantile of the standard normal distribution.
\subsection{Comeback to complete data}
At the best of our   knowledge  there are no analogous results for the complete.The analogous results can be state by putting $C=+\infty$ and therefore $\overline{G}(\cdot)=1$. \\
To give an overview of the performance of our estimator, we graph it in the next section.
\section{Simulation Study}{\label{Sect 4}}
The main objective of this part is to evaluate the good behavior of our estimator for different censoring rates and sample sizes and to show the efficiency of this approach compared to the classical one.
\subsection{Consistency}
\subsubsection{Simulations settings}
For this purpose, simulation data are generated from model (\ref{model}) where covariates $X$ have normal distribution on ${\mathcal{N}}(5,2)$ and random effect $\epsilon$ have standard normal distribution. For the rest, we proceed in the following way:
\begin{itemize}
	\item[$\bullet$] Generate the censoring variable $C$ according to the normal law with ($\mu=11, \sigma=1$). 
	\item[$\bullet$] Calculate the response variable $T=\alpha X+\beta+c \epsilon $ with ($\alpha=2,  \beta=1$ and $c=0.2$).
	\item[$\bullet$]The censored data are calculated as $Y=T\wedge C$ and $\delta={\mathds{1}}_{\{T \leq C\}}$.The observed data therefore becomes $(X,Y,\delta)$.
	\item[$\bullet$] The Kaplan-Meier estimator is calculated for the distribution function of censorship variable $C$ in (\ref{K-M}).
	\item[$\bullet$] The choice of $K$ is not decisive, we choose then the standard Gaussian kernel ($K(u)=\frac{1}{\sqrt{2\pi}}\exp{(-\frac{u^2}{2})}$). In contrast, the choice of bandwidth is crucial that's why we take the optimal one $h_n=0.55\left(\frac{log(n)}{n}\right)^{0.33}$.
	\item[$\bullet$] Finally, we calculate the expression of our estimator obtained from (\ref{estimRRC}) for a compact set ${\mathcal{C}}=[1,4]$. 
\end{itemize}
Under each simulation setting, 100,300 and 500 replications are conducted.
\subsubsection{Simulation results}
\paragraph{Effect of sample size with fixed censorship rate.}
From Figure \ref{figure1}, we can see that the quality of fit increases with $n$ when censoring rate (CR) and bandwidth kept unchanged.
\begin{figure}[!h]
	\begin{minipage}[c]{.26\linewidth}
		\includegraphics[width=1.4\textwidth]{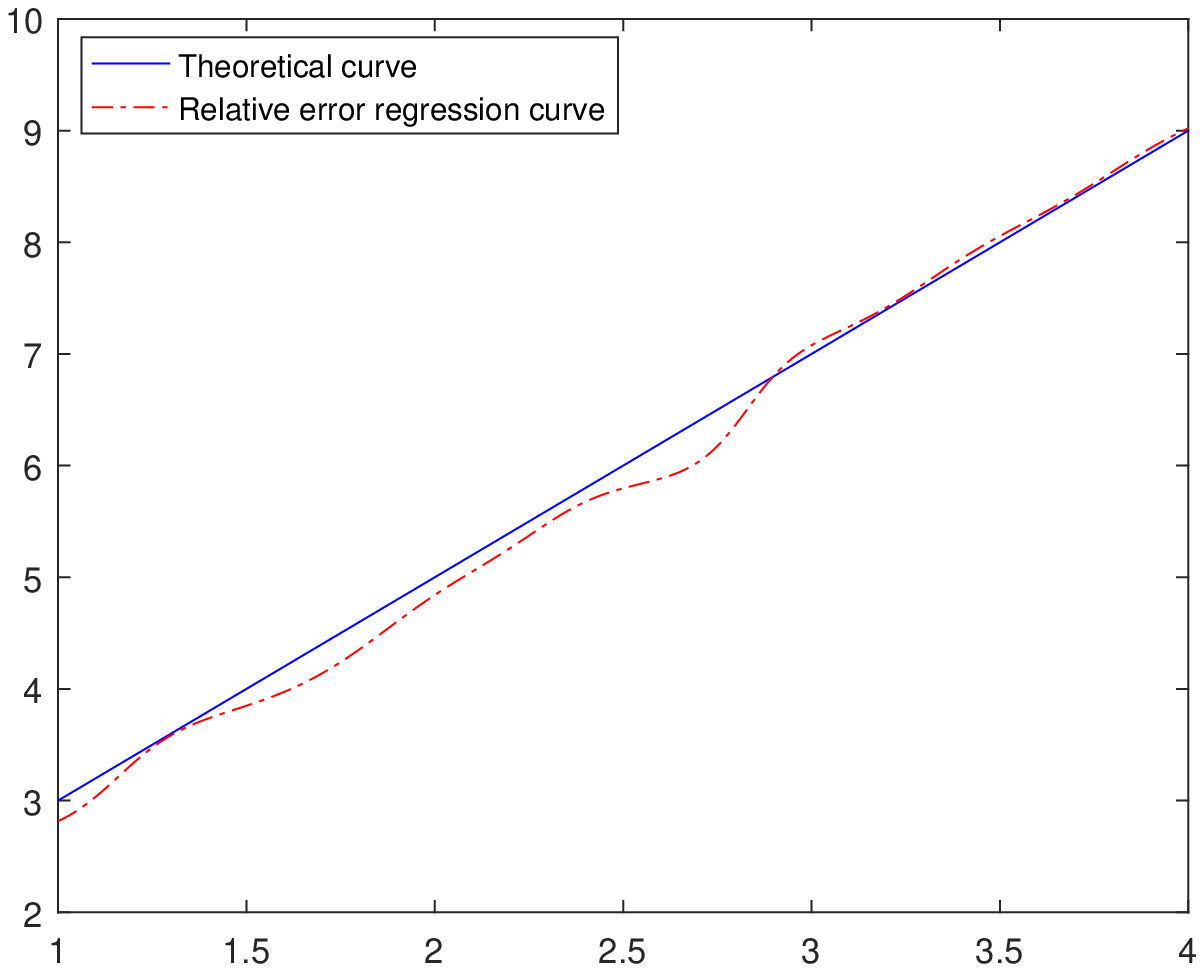}
	\end{minipage} \hfill
	\begin{minipage}[c]{.26\linewidth}
		\includegraphics[width=1.4\textwidth]{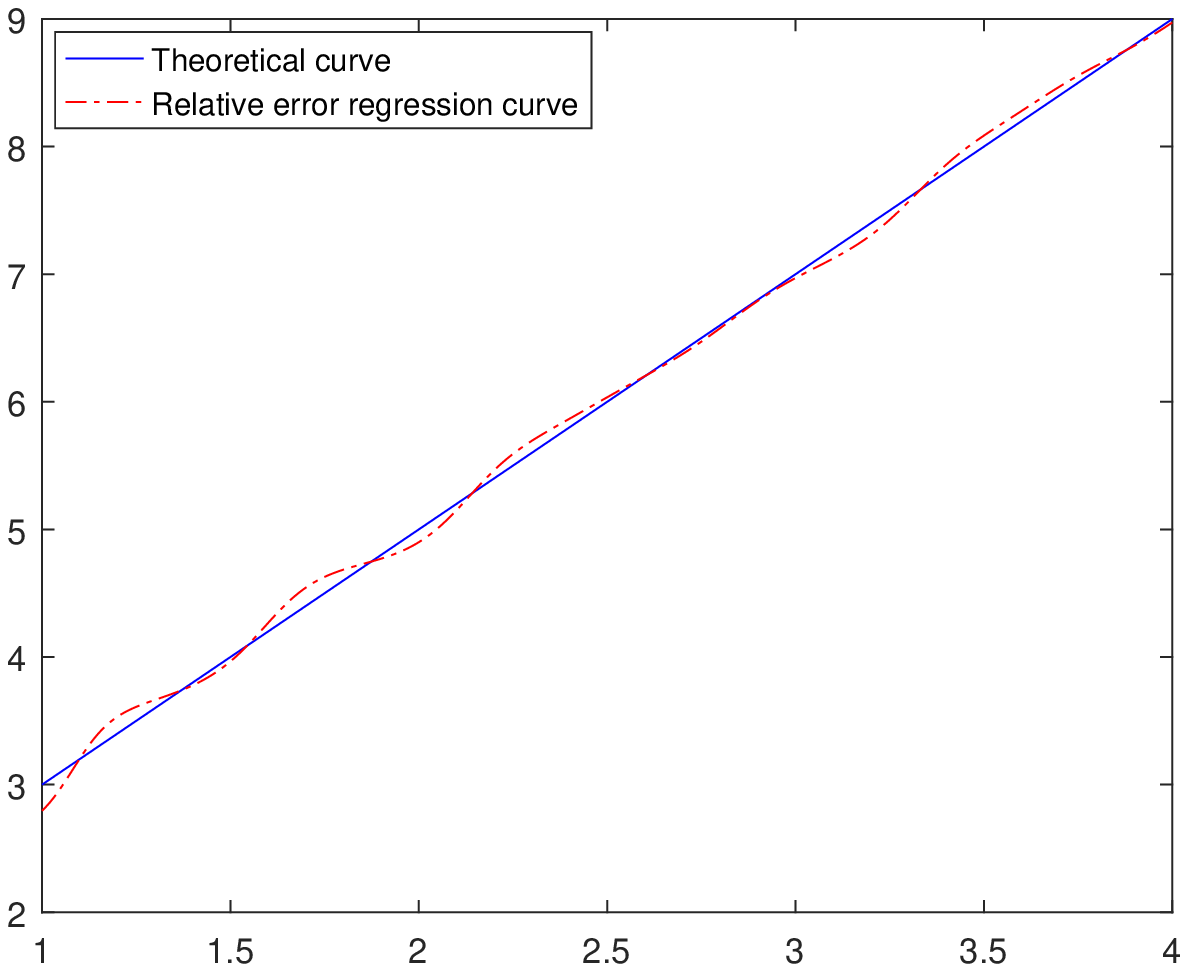}
	\end{minipage} \hfill
	\begin{minipage}[c]{.26\linewidth}
		\includegraphics[width=1.4\textwidth]{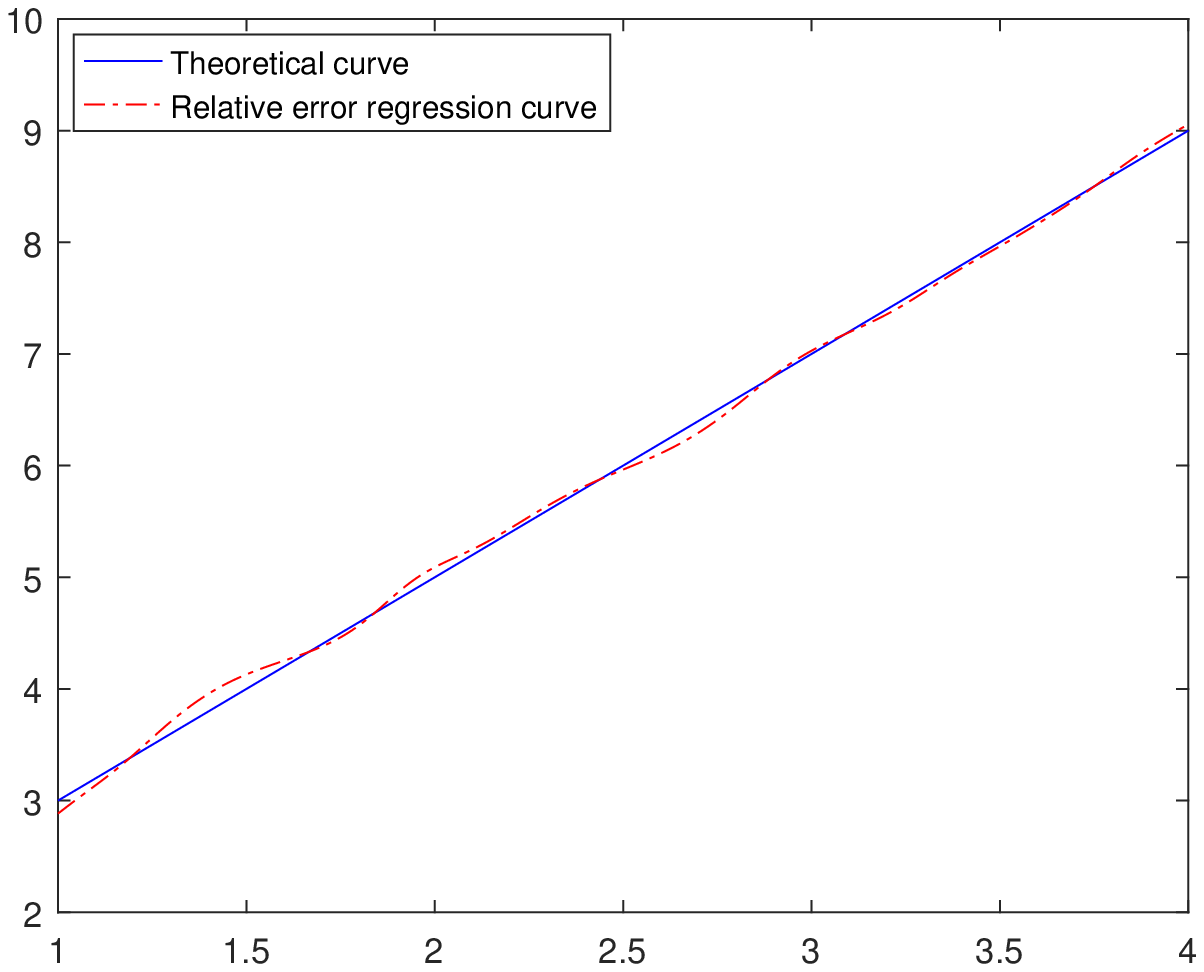}
	\end{minipage}\hfill\hfill
	\caption{$r(x)=2x+1$, $ CR\approx 50\%$ and $n=100,300$ and $500$, respectively.}\label{figure1}
\end{figure}
\paragraph{Effect of censoring rate (CR) with a fixed sample size.}
Figure \ref{figure2} is obtained by varying the censoring rate for a fixed sample size (n = 300) and for that we push the variable of interest on the right by increasing the average of the normal distribution to observe more censorship variable (the number of complete observation decreases). It can be seen that the forecasting quality decreases when the CR rises in particular in the border.
\begin{figure}[!h]
	\begin{minipage}[c]{.26\linewidth}
		\includegraphics[width=1.4\textwidth]{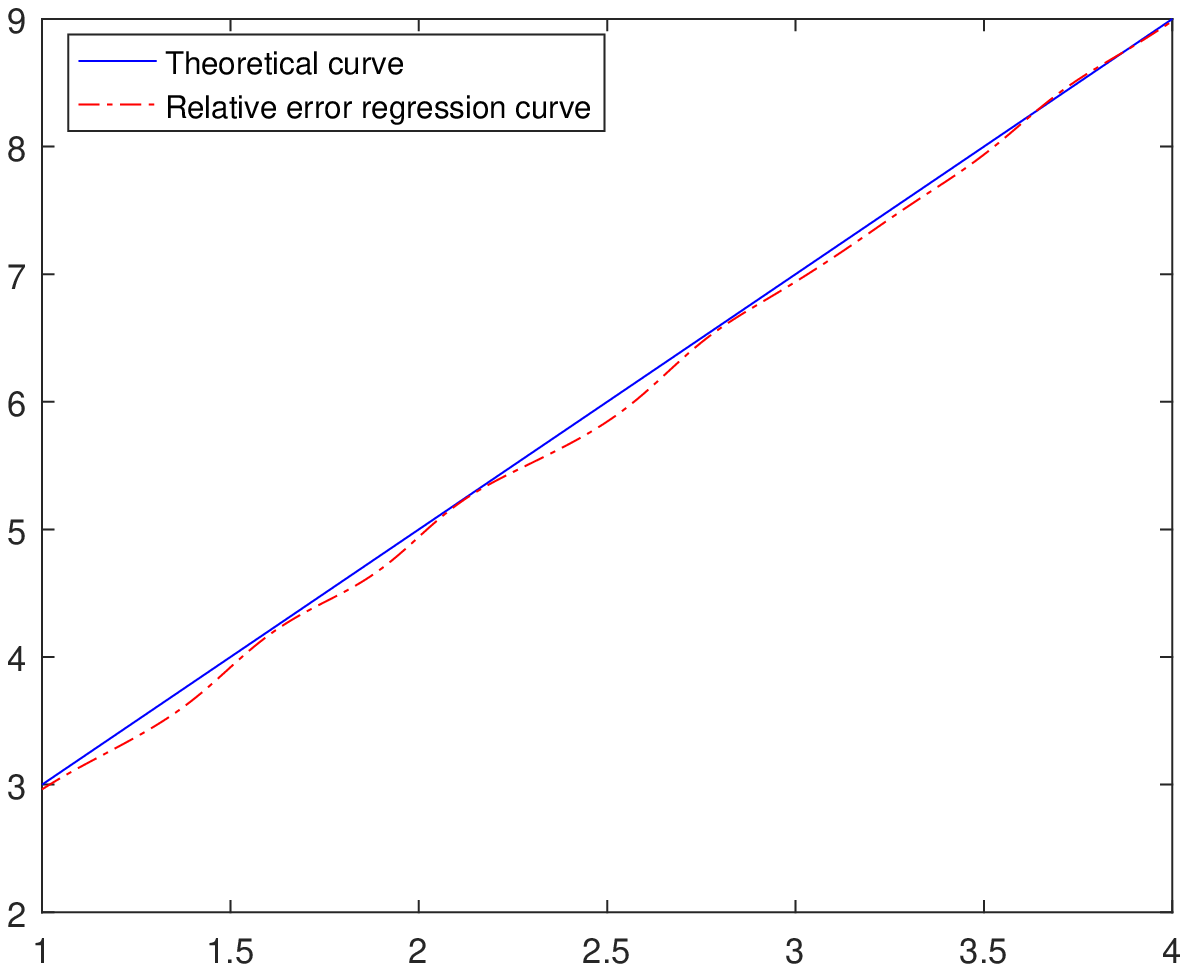}
	\end{minipage} \hfill
	\begin{minipage}[c]{.26\linewidth}
		\includegraphics[width=1.4\textwidth]{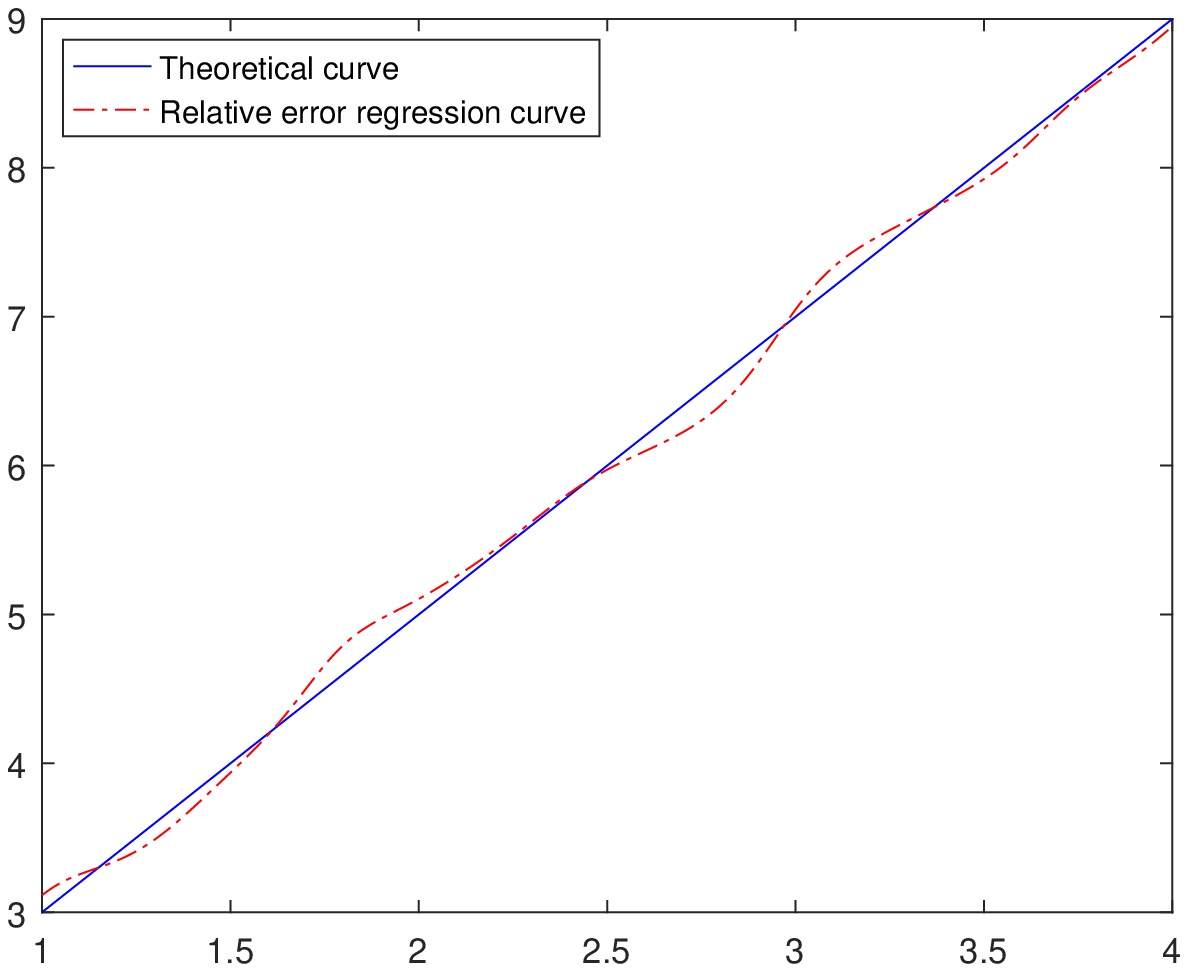}
	\end{minipage} \hfill
	\begin{minipage}[c]{.26\linewidth}
		\includegraphics[width=1.4\textwidth]{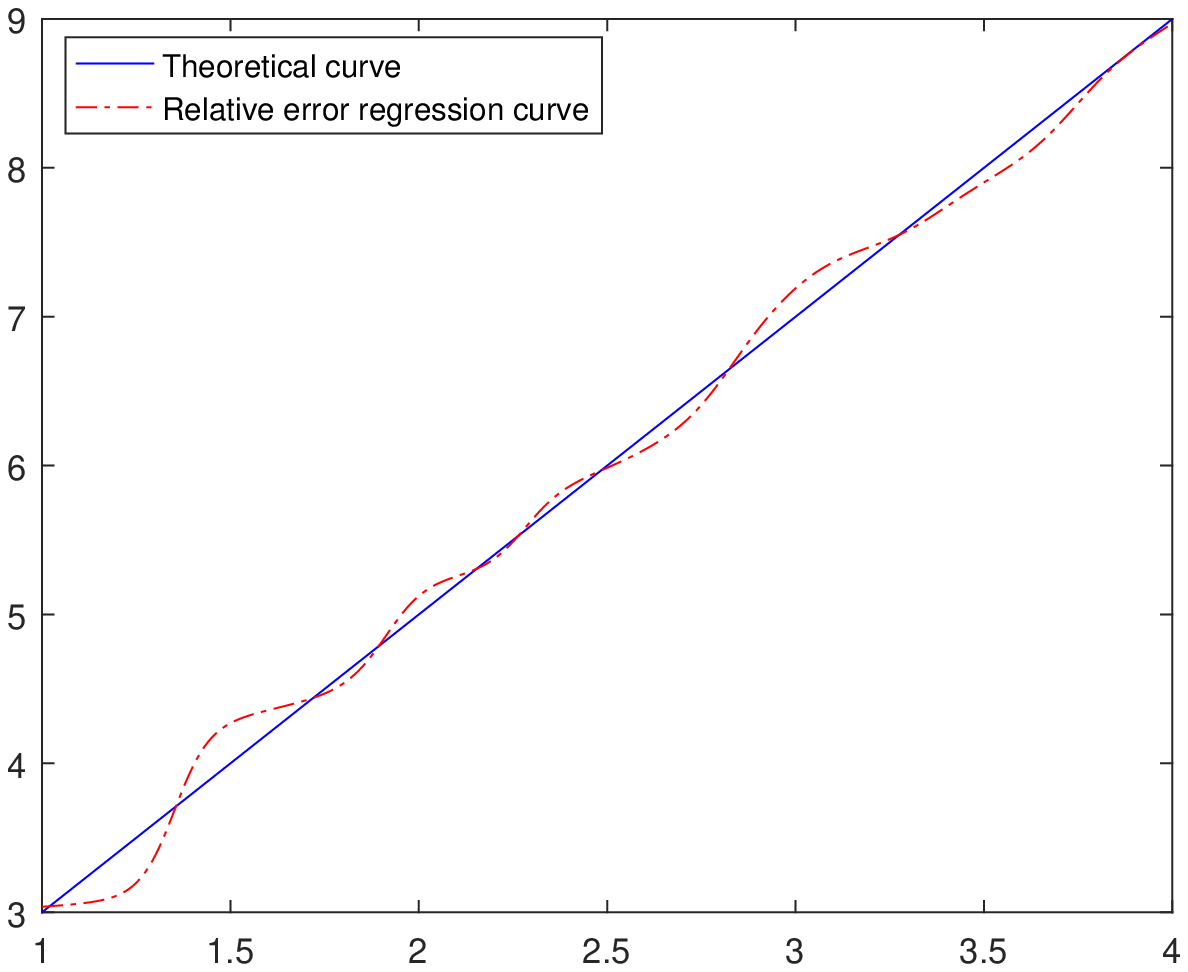}
	\end{minipage}\hfill\hfill
	\caption{ $r(x)=2x+1$, $n=300$ and $ CR\approx 15\%,50\%$ and $80\%$, respectively.}\label{figure2}
\end{figure}
\paragraph{Nonlinear functions}
We consider the case of nonlinear regression by choosing this three kinds of model:\\
(1) Parabolic $T=x^2+1+\varepsilon$, \\ (2) Sinusoidal $T=\sin\big(\frac{1}{2}x\big)^2+1+\varepsilon$, \\ (3) Exponential $T=\exp\big(\frac{1}{2}x\big)+\varepsilon$.\\
The curves are shown in Figure \ref{figure3}. Note that the quality of fit deteriorates when the period is very small.
\begin{figure}[!h]
	\begin{minipage}[c]{.26\linewidth}
		\includegraphics[width=1.4\textwidth]{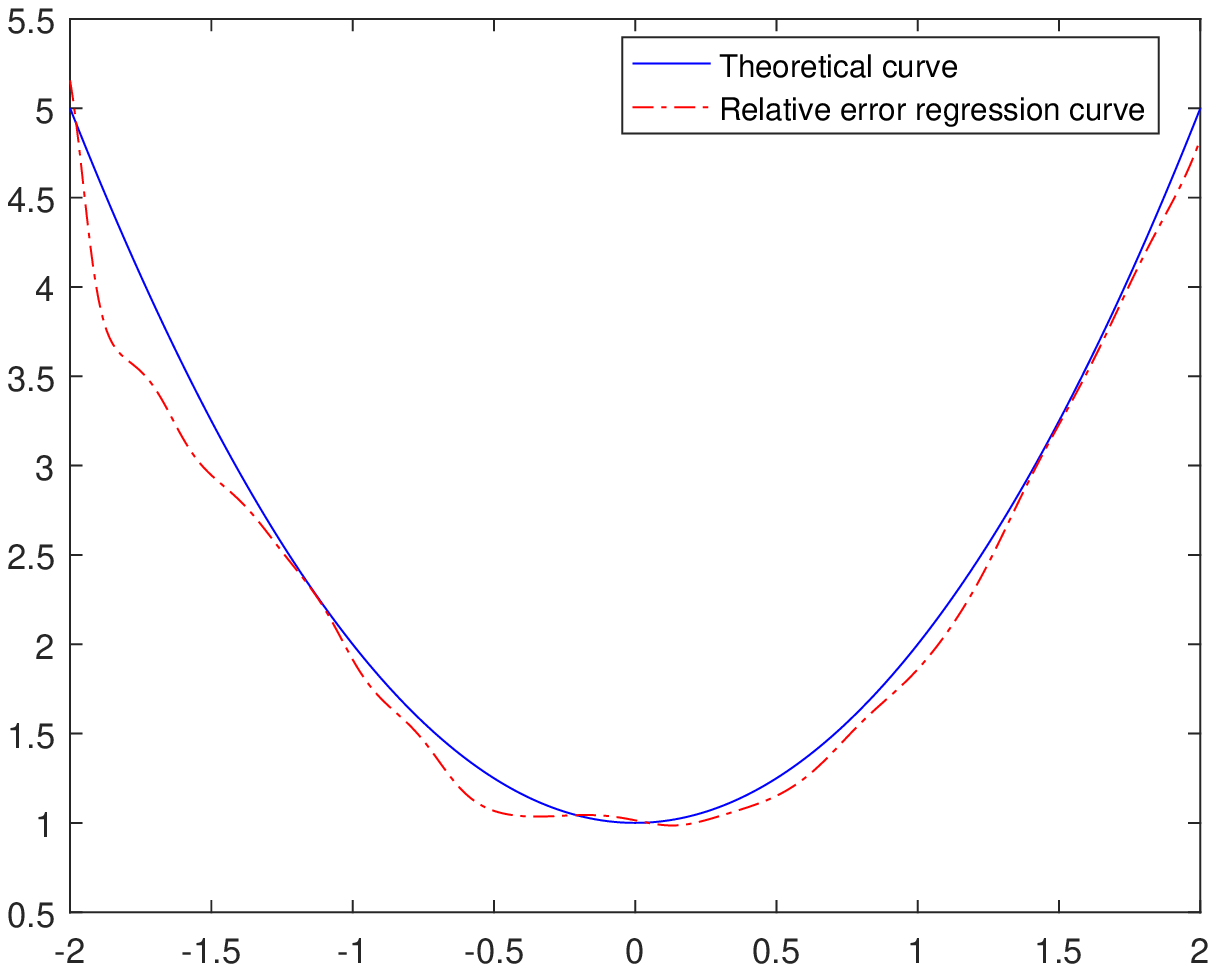}   
	\end{minipage} \hfill
	\begin{minipage}[c]{.26\linewidth}
		\includegraphics[width=1.4\textwidth]{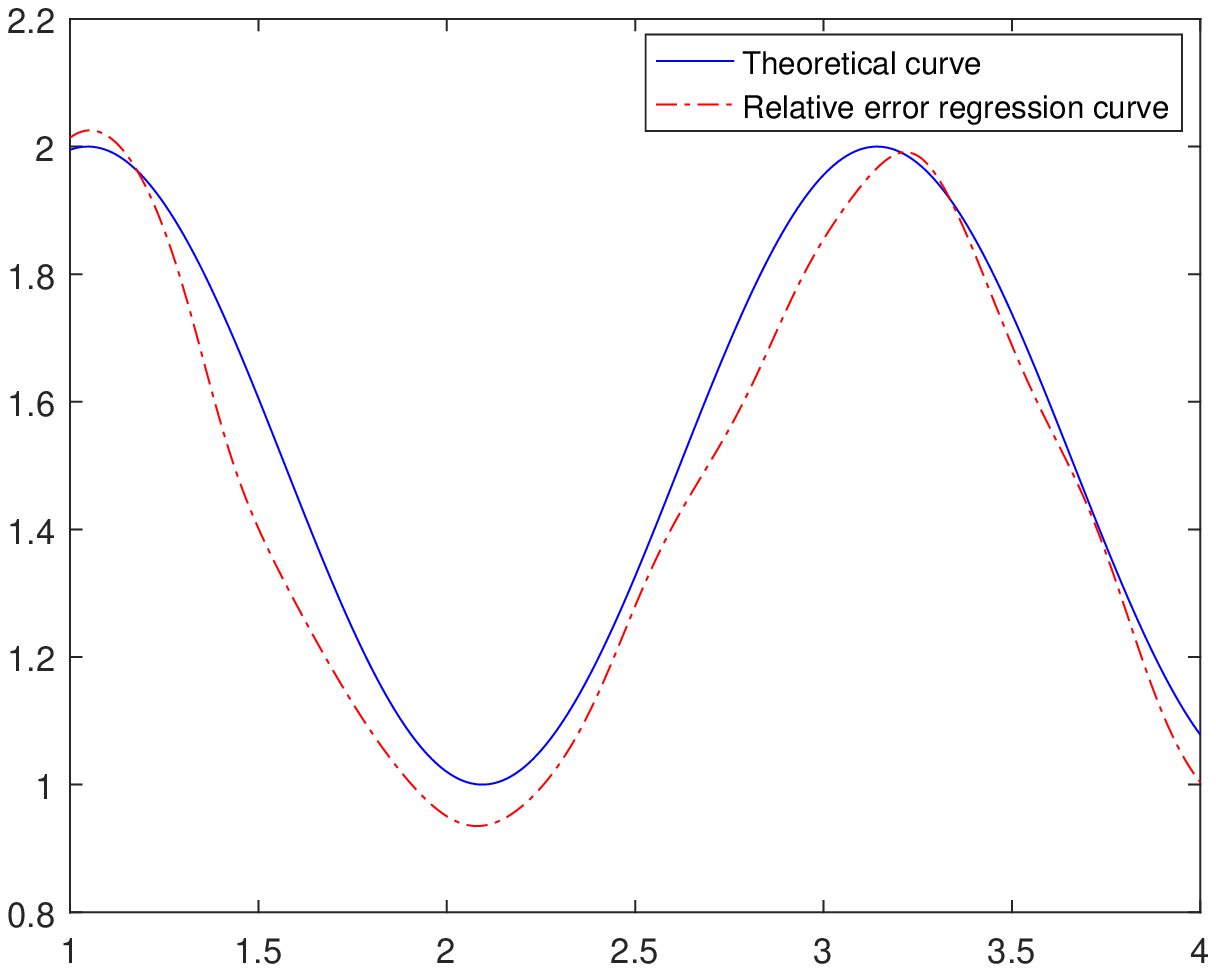}
	\end{minipage} \hfill
	\begin{minipage}[c]{.26\linewidth}
		\includegraphics[width=1.4\textwidth]{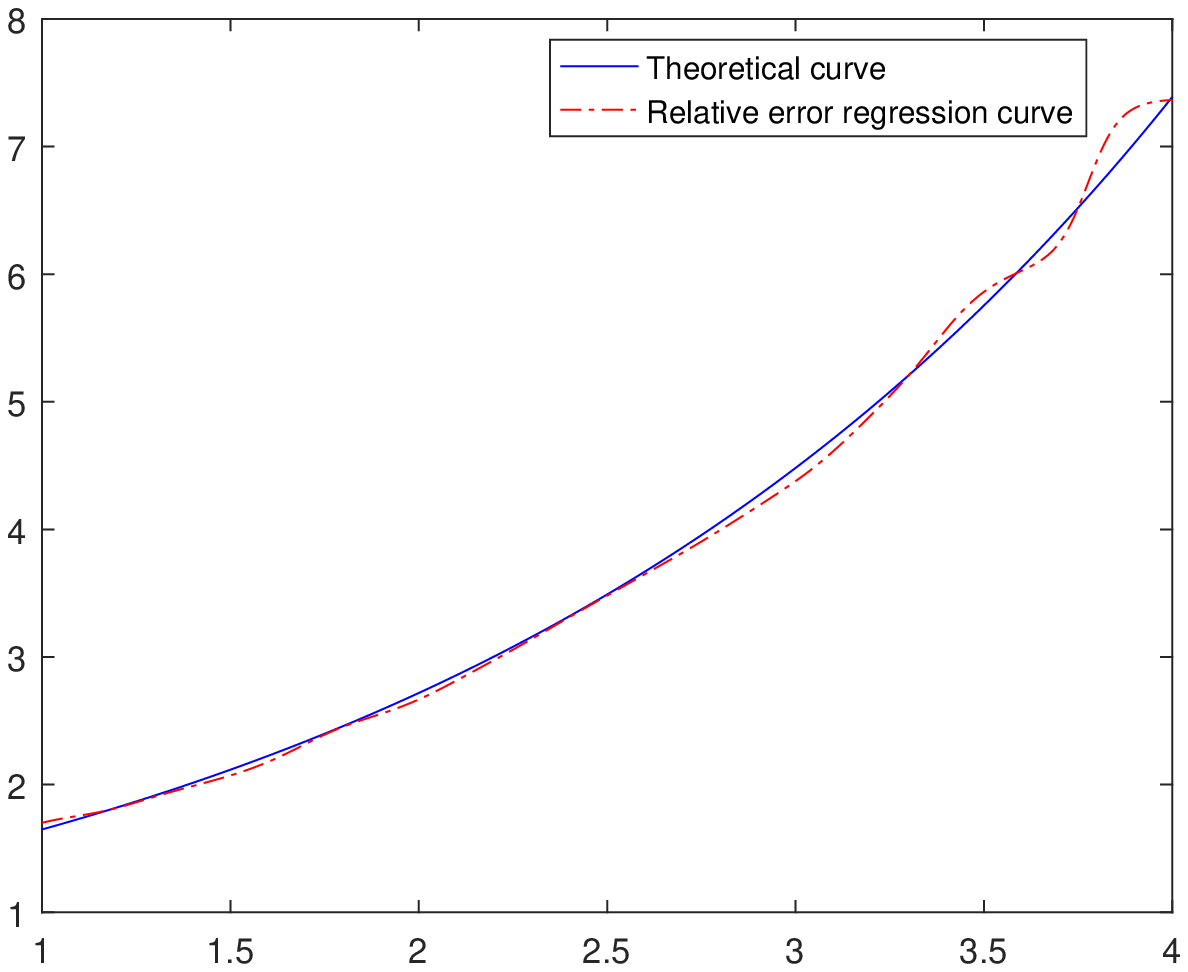}
	\end{minipage}\hfill\hfill
	\caption{$CR \approx 50\%$, $n=500$ for parabolic, sinus and exponential, respectively.}\label{figure3}
\end{figure}
\paragraph{Classical regression versus Relative error regression with respect to the censorship rate}
In order to highlight the efficiency of relative error estimation, we draw up a comparative study. For that, we simulated the classical regression estimator for randomly right censored data defined in \hyperref[guessoum]{Guessoum and Ould Sa\"\i d (2008)} by \[\displaystyle{\hat{r}(x)}=\frac{\displaystyle{\sum_{i=1}^{n}\frac{\delta_i Y_i}{\bar{G}_n(Y_i)}K\left(\frac{x-X_i}{h_n}\right)}}{\displaystyle{\sum_{i=1}^{n}K\left(\frac{x-X_i}{h_n}\right)}},\]
for the same parameters listed below. 
From Figure \ref{figure4} below, it is clear that the classical regression estimator deteriorates when the censorship rate increases considerably.
\begin{figure}[!h]
	\begin{minipage}[c]{.26\linewidth}
		\includegraphics[width=1.4\textwidth]{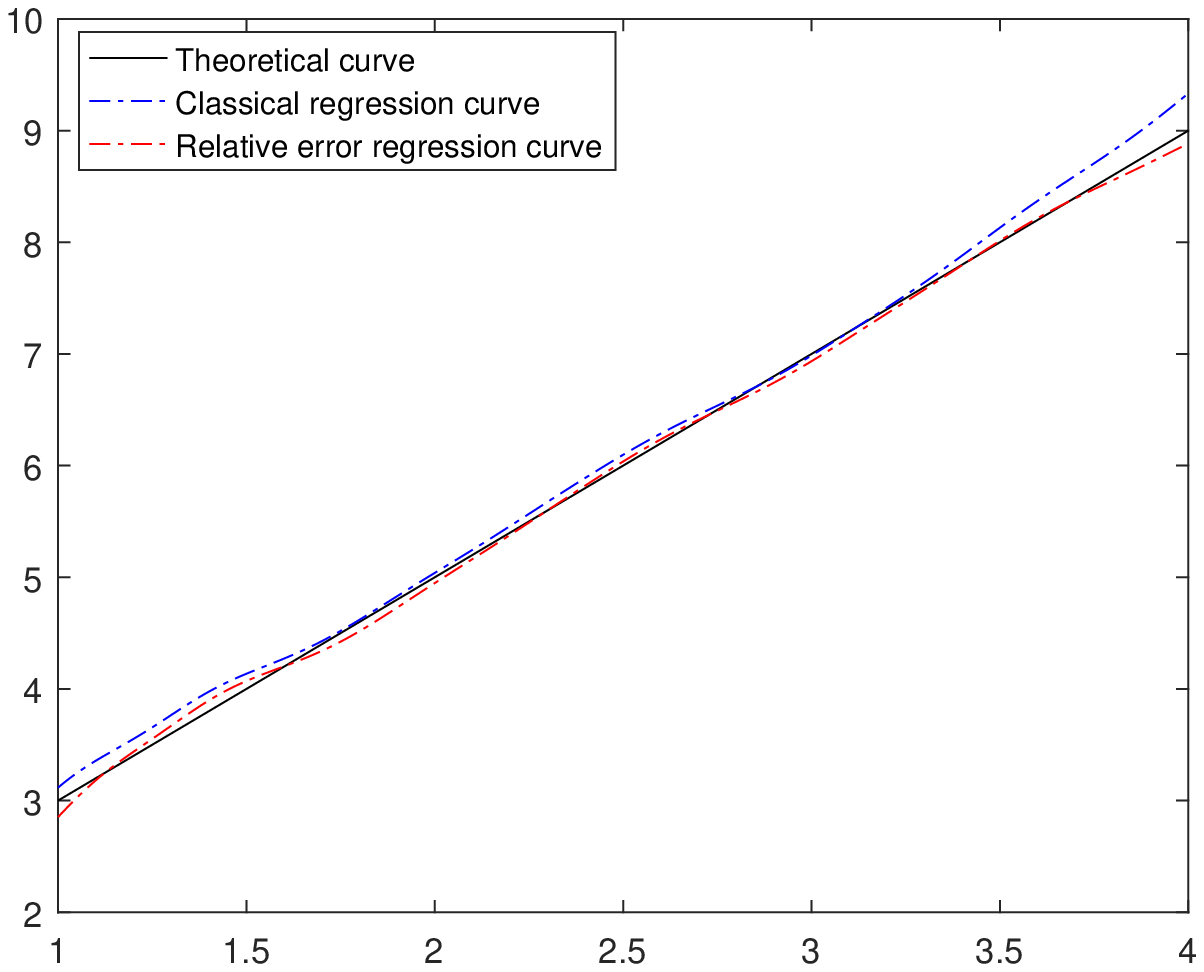}
	\end{minipage} \hfill
	\begin{minipage}[c]{.26\linewidth}
		\includegraphics[width=1.4\textwidth]{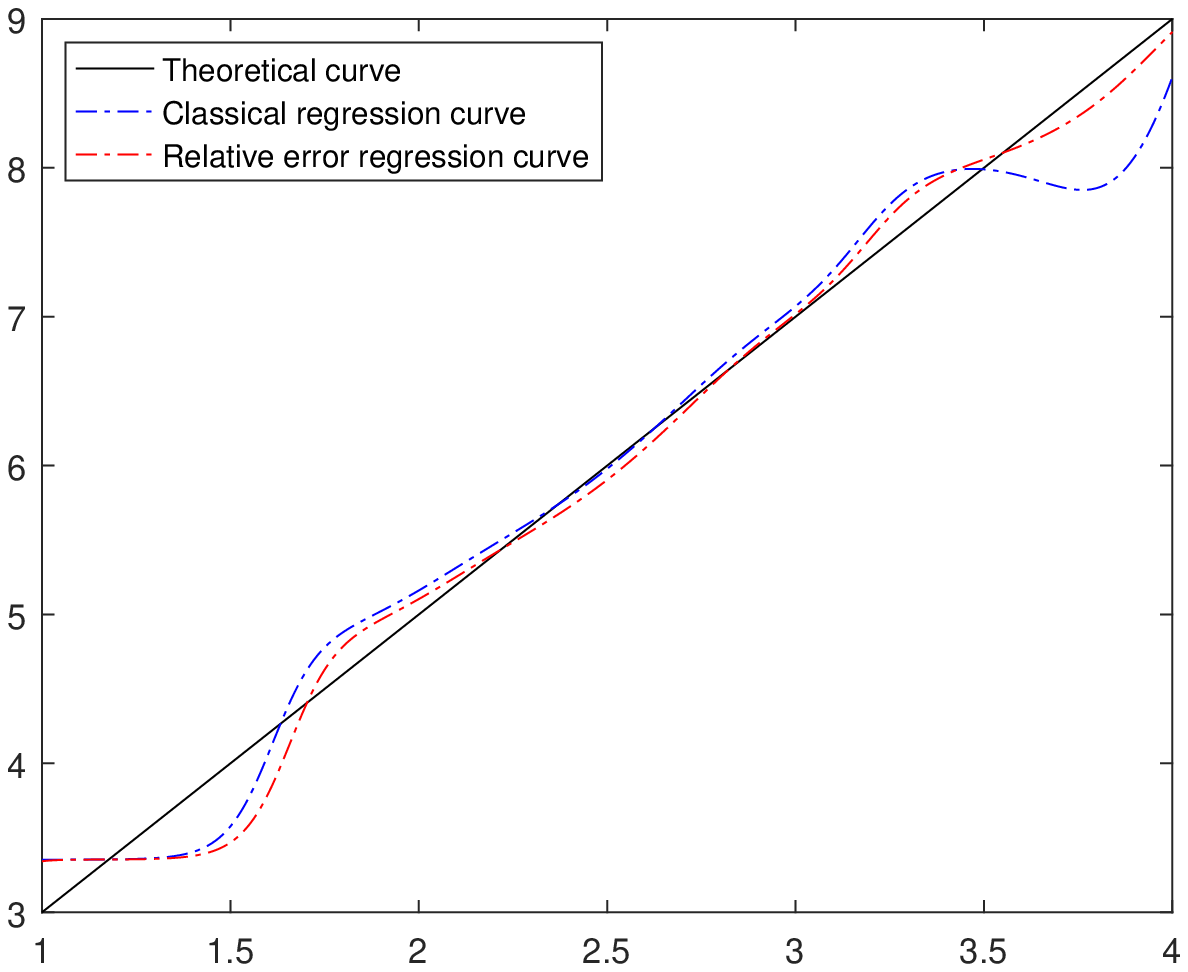}
	\end{minipage} \hfill
	\begin{minipage}[c]{.26\linewidth}
		\includegraphics[width=1.4\textwidth]{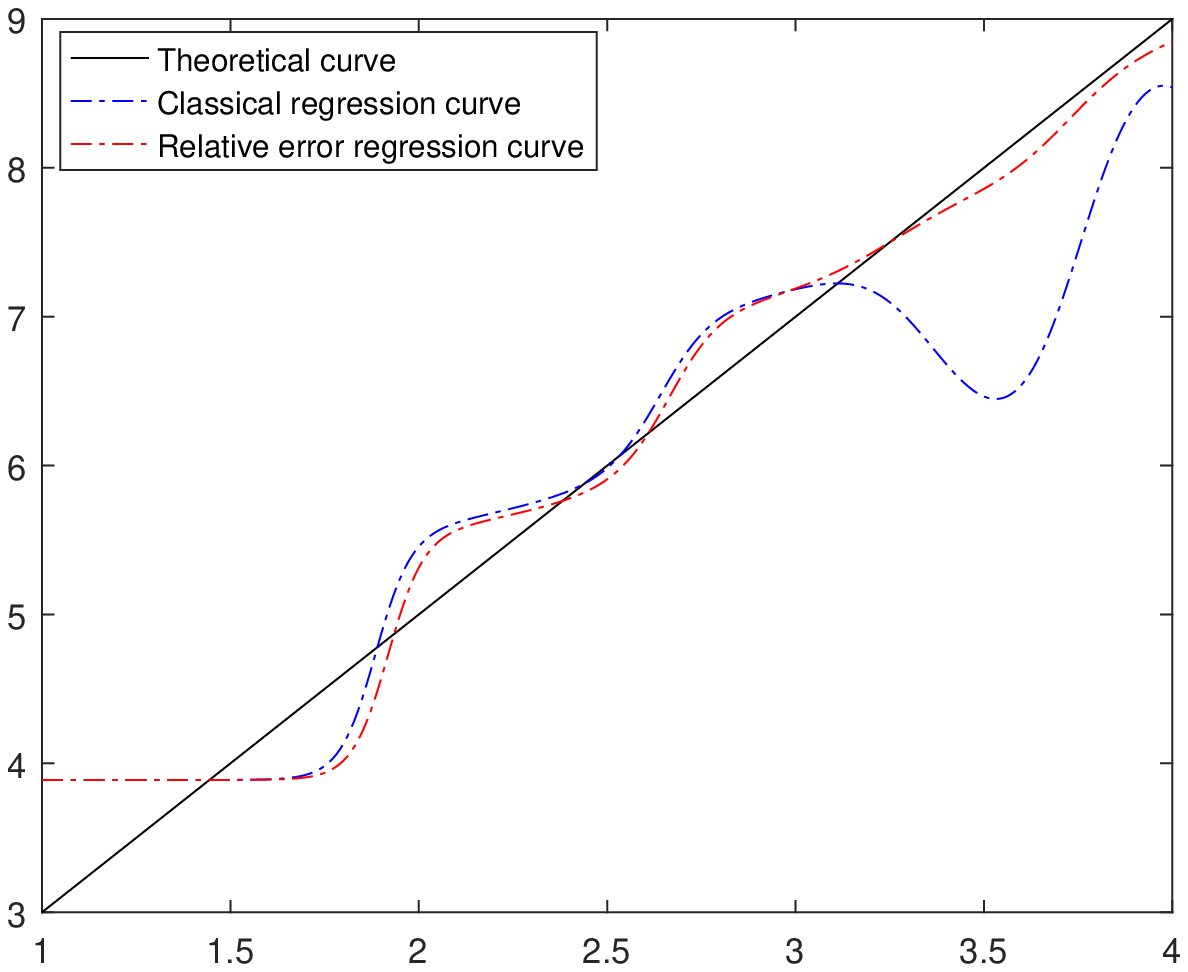}
	\end{minipage}\hfill\hfill
	\caption{$r(x)=2x+1$, $n=100$ and $CR\approx 15\%,50\%$ and $75\%$, respectively.}\label{figure4}
\end{figure}

\paragraph{Effect of outliers for the two methods with a fixed sample size and censorship rate.}
To show the robustness of our approach, we generate the case where the data contains outliers. For that we set both that sample size and censorship rate ($n=500$ and $TC \approx 50$). To create this outlier effect, 20 values of this sample are multiplied by a factor called $MF$. From Figure \ref{figure7}, we can see that our estimator is very close to the theoretical curve with respect to the classical one. Then, it is very clear that our approach is widely  better than the classical one in the presence of outliers.
\begin{figure}[!h]
	\begin{minipage}[c]{.26\linewidth}
		\includegraphics[width=1.4\textwidth]{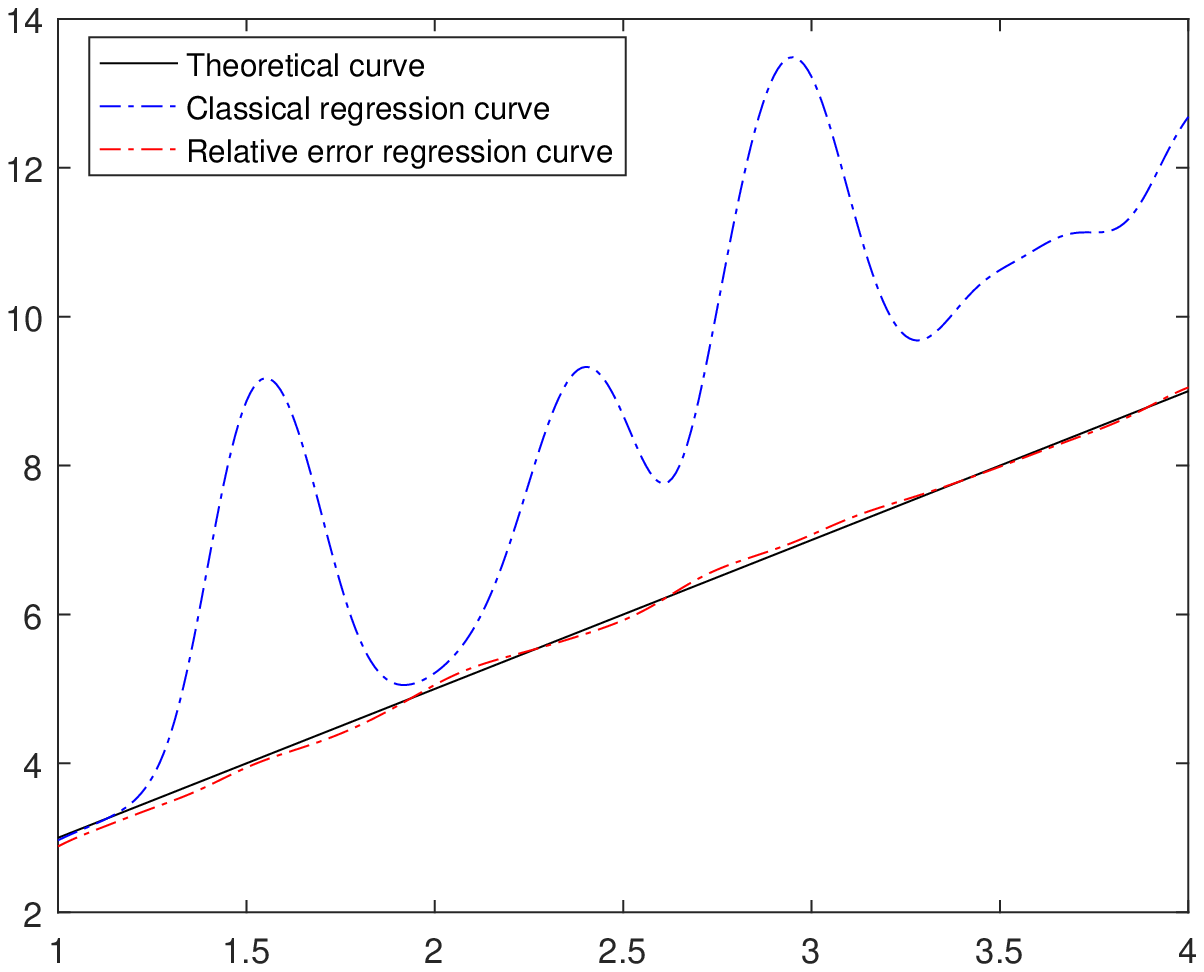}
	\end{minipage} \hfill
	\begin{minipage}[c]{.26\linewidth}
		\includegraphics[width=1.4\textwidth]{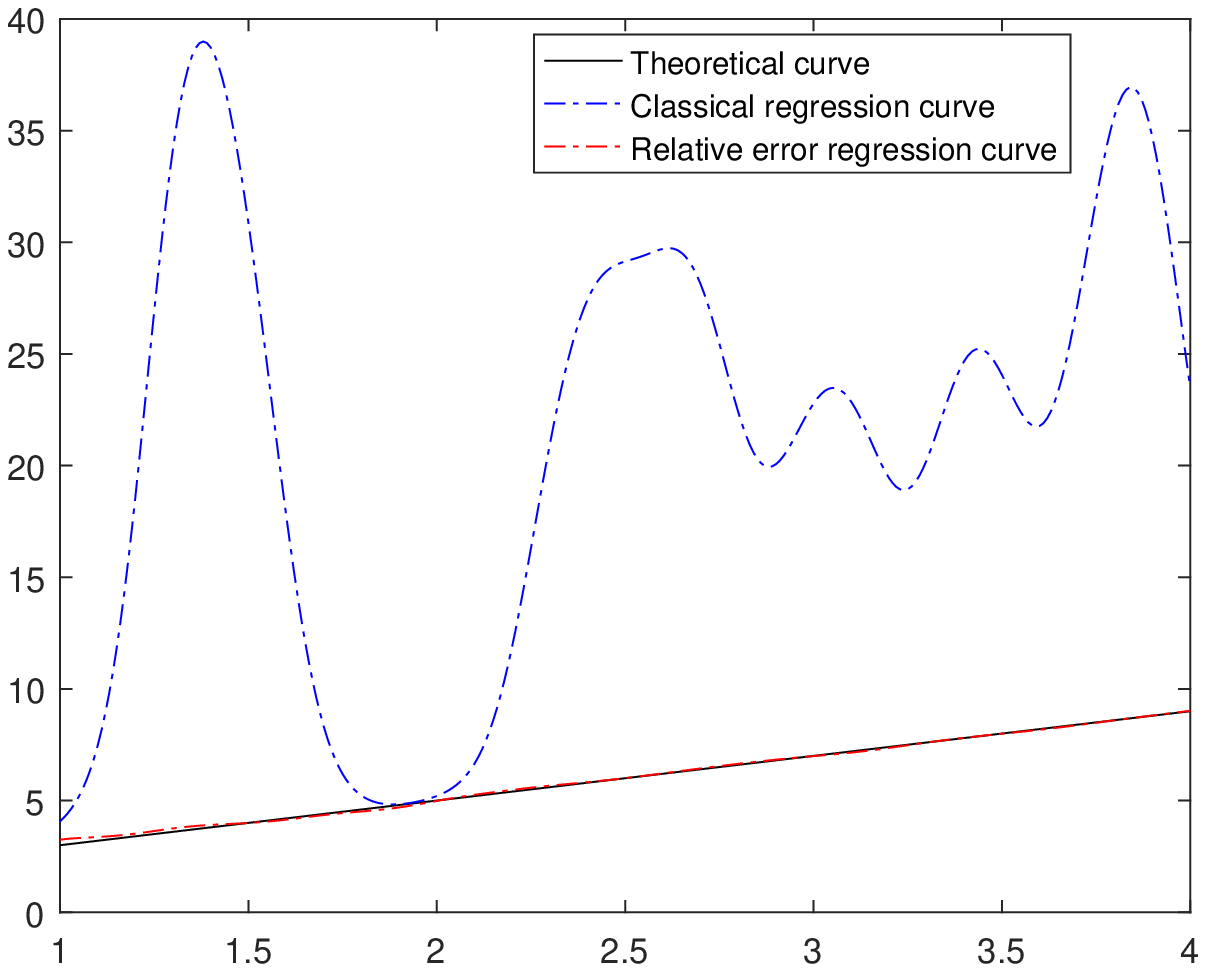}
	\end{minipage} \hfill
	\begin{minipage}[c]{.26\linewidth}
		\includegraphics[width=1.4\textwidth]{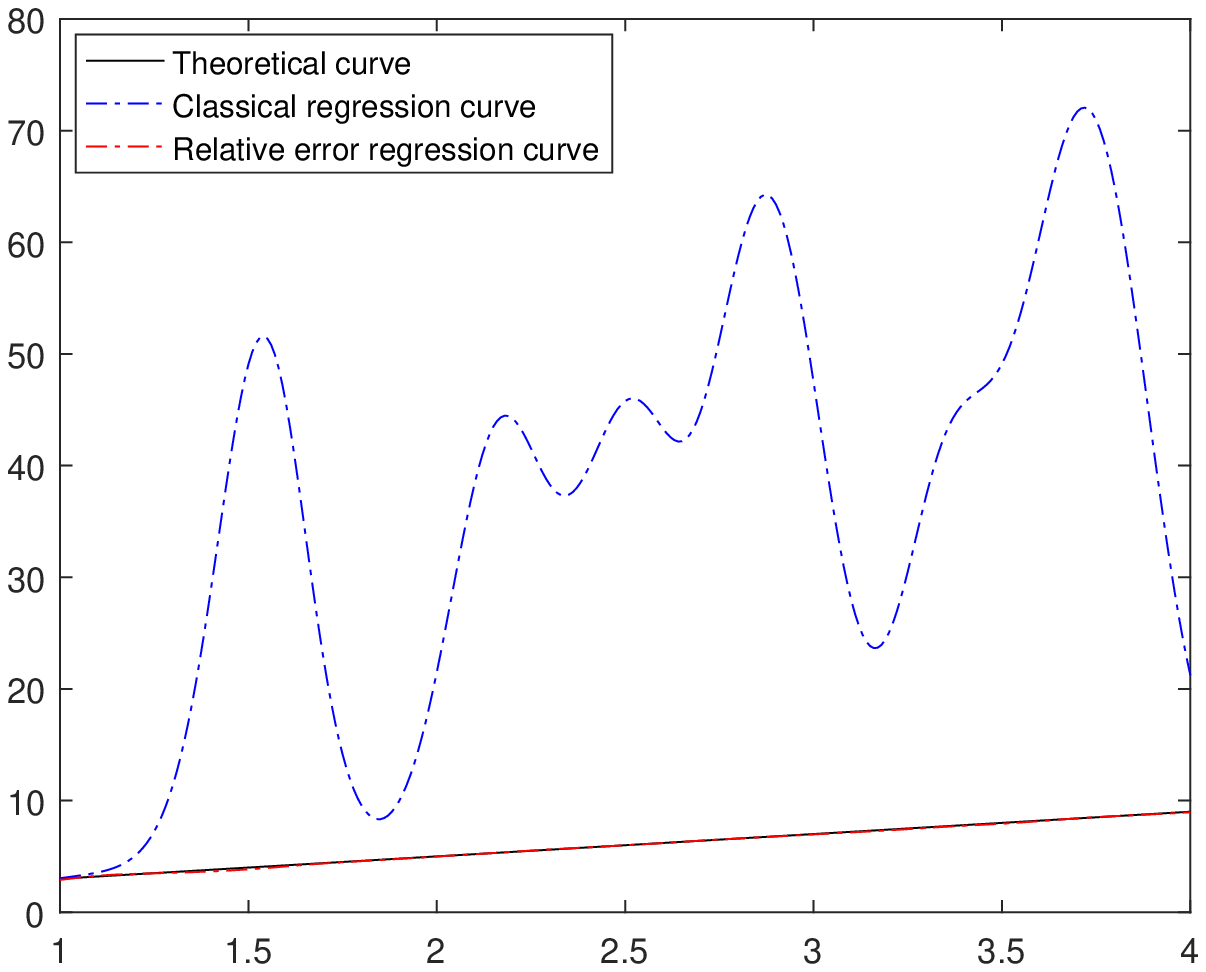}
	\end{minipage}\hfill\hfill
	\caption{ $r(x)=2x+1$, $n=500$, $ CR\approx 50\%$ and $ MF=10, 50$ and $100$ respectively.}\label{figure7}
\end{figure}
\newpage
\paragraph{Table of mean squared error.} 
To close this part, we compare the values of the mean squared error ($MSE$) for the classical regression ($MSE^1$) and the relative error regression ($MSE^2$) methods represented by
\[MSE^1=\frac{1}{n}\sum_{i=1}^{n}(T_i-\hat{r}(X_i))^2 \; \; \; \; \; \text{and} \; \; \; \; \; MSE^2=\frac{1}{n}\sum_{i=1}^{n}(T_i-r_n(X_i))^2\]
for three sample sizes and censoring level.\\
It can be seen from Table \ref{table1} that the variability of the mean squared error ($MSE$) of the two methods for a low censoring rate is not significantly considerable, i.e. the performance is the same for both methods. However, when the data is affected by the presence of censoring the $MSE$ of relative error regression becomes smaller than the classical regression. It means that the relative error regression model is more stable than the classical regression in the presence of censorship.
\begin{table}[!ht]
	\begin{tabular}{c|c|c|c}
		Sample & CR  & $MSE^1$ & $MSE^2$\\
		size &($\approx \%$)&(Classical Regression)&(Relative Error Regression)\\
		\hline \hline
		& 20 & 0.0150 & 0.0027 \\
		n=100	& 50 & 0.0802 & 0.0339\\
		& 80 & 0.4009 & 0.0885\\
		\hline
		& 20 & 0.0103 & 0.0057\\
		n=300	& 50 & 0.0275 & 0.0138\\
		& 80 & 0.1284  & 0.0425 \\
		\hline
		& 20 & 0.0078 & 0.0008 \\
		n=500 & 50 & 0.0138 & 0.0032\\
		& 80 & 0.1359 & 0.0136\\
		\hline
	\end{tabular}
	\centering
	\caption{The $MSE$ errors according to the censoring rates. } 
	\label{table1}
\end{table}

\subsection{Asymptotic normality}
The purpose of this part is to highlight the theoretical results obtained in Theorem \ref{theo2}, by studying by simulation the asymptotic normality. To do this, we compare the shape of the estimated density to that of the standard normal density in the case of a linear regression model:
\[T=2X+1+\epsilon,\]
we reproduce the same steps as in the previous subsection for $X \leadsto \exp(1.5)$ and $C \leadsto \exp(3)$. 
Throughout this subsection, we fix $x = 0$ and replicate $m$ independent $n$-sample size. Then, we calculate the asymptotic variance. For that we replace the $\Upsilon_k(.)$ for $k=2,3,4$ by their estimators in (\ref{upsilon2l}) and $\bar{r}_{\ell,n}(.)$ for $\ell=1,2$ by their estimators in (\ref{estimerell}). A calculable estimator of the normalized deviation is given by:
\[\sigma^2_n(0)=\kappa\frac{\hat{\Upsilon}_2(0)\bar{r}_{2,n}^2(0)-2\hat{\Upsilon}_3(0)\bar{r}_{1,n}(0)\bar{r}_{2,n}(0)+\hat{\Upsilon}_4(0)\bar{r}_{1,n}^2(0)}{\bar{r}_{2,n}^4(0)},\] 
we consider now the sequence:
\[A_j=\left(\frac{nh_n}{\sigma_{n,j}^2(0)}\right)^{1/2}(r_{n,j}(0)-1), \]
which under Theorem \ref{theo2}, $A_j$ follows asymptotically to $\mathcal{N}(0,1)$. Then, we build a kernel density estimator for the $A_k$ that we compare with the standard normal law for different values of $n$ and $h^*=c \left(\frac{\log(m)}{m}\right)^{0.2}$ where the constant $c$ is chosen appropriately. Finally, for a sample size $m=200$ and a censorship rate ($CR\approx 66\%$), we conduct $n=100, 300$ and $500$ replications. The figure \ref{figure6} show the quality of goodness of fit. 
\begin{figure}[!h]
	\begin{minipage}[c]{.26\linewidth}
		\includegraphics[width=1.4\textwidth]{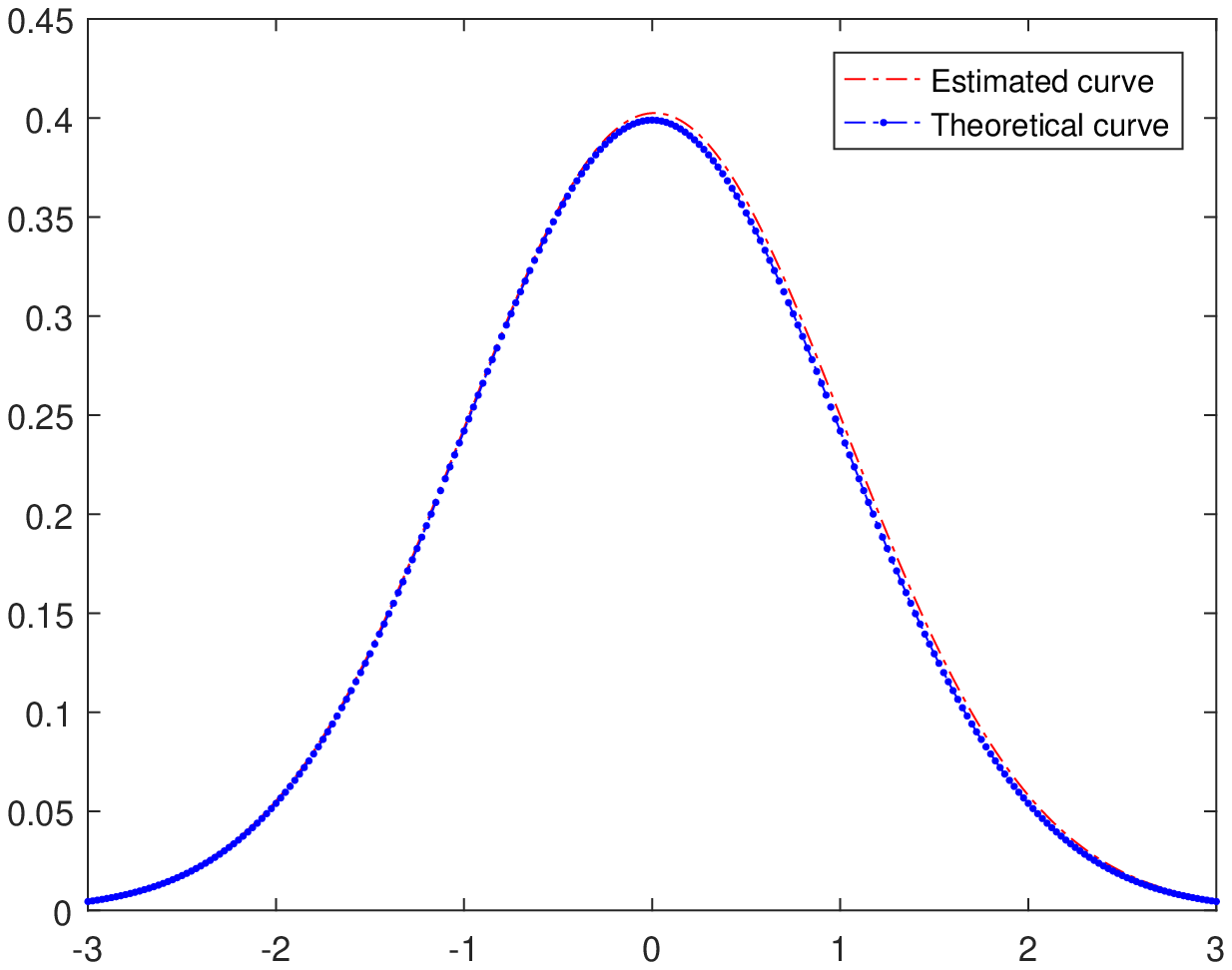}
	\end{minipage} \hfill
	\begin{minipage}[c]{.26\linewidth}
		\includegraphics[width=1.4\textwidth]{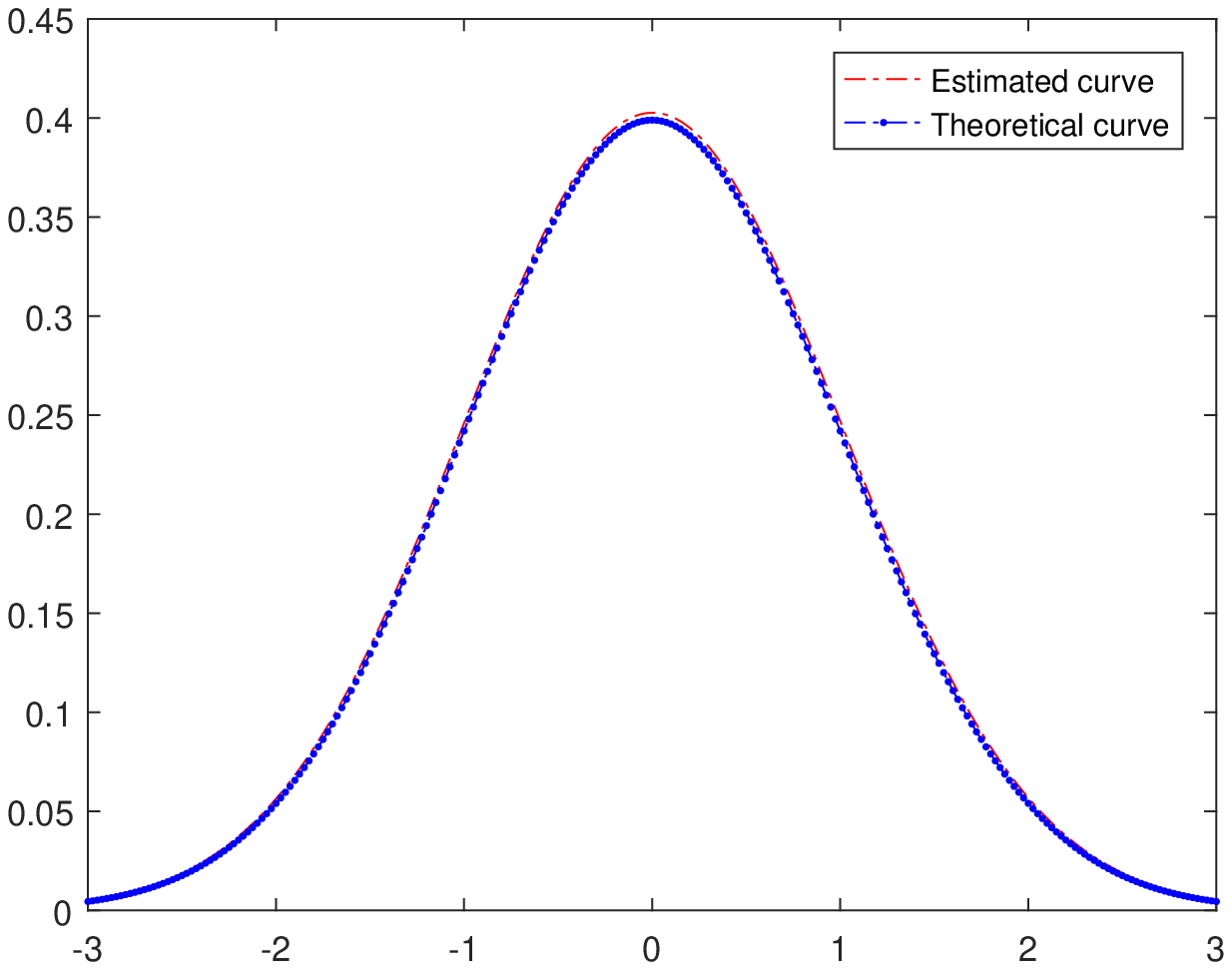}
	\end{minipage} \hfill
	\begin{minipage}[c]{.26\linewidth}
		\includegraphics[width=1.4\textwidth]{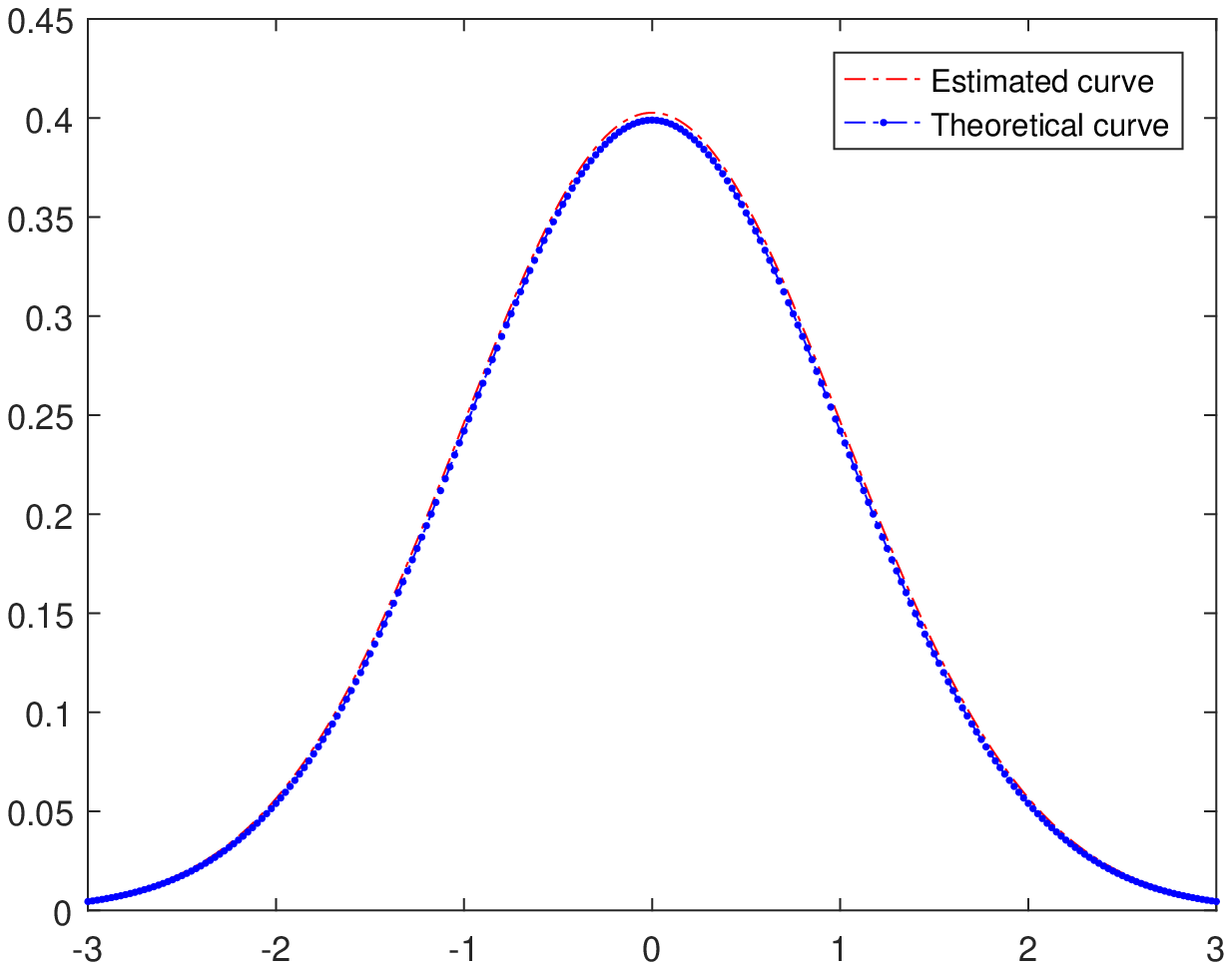}
	\end{minipage}\hfill\hfill
	\caption{ $CR\approx 66\%$ and $n=100,300$ and $500$, respectively.}\label{figure6}
\end{figure}
\section{Auxiliary results and proofs}\label{Sect 5}
\begin{proof} 
Using (\ref{P-S}), (\ref{rntilde}), (\ref{estimRRC}), (\ref{estimerell}) and for $x \in \mathbb{R}$,  we consider the following decomposition : 
		\begin{multline*}
	 r_{n}(x)-r(x) =\frac{1}{\bar{r}_{2,n}(x)} \left\{ \Big[\big(\bar{r}_{1,n}(x)-\bar{\tilde{r}}_{1,n}(x)\big) + ( \bar{\tilde{r}}_{1,n}(x)-\mathbb{E}[\bar{\tilde{r}}_{1,n}(x)])+ (\mathbb{E}[\bar{\tilde{r}}_{1,n}(x)]-\bar{r}_{1}(x))\Big] \right.\\ 
	+ r(x) \left. \Big[( \bar{\tilde{r}}_{2,n}(x)-\bar{r}_{2,n}(x))+( \mathbb{E}[\bar{\tilde{r}}_{2,n}(x)]-\bar{\tilde{r}}_{2,n}(x))+  (\bar{r}_{2}(x)-\mathbb{E}[\bar{\tilde{r}}_{2,n}(x)])\Big] \right\}
	\end{multline*}
which by triangle inequality, we have
	\begin{multline*}
	\sup_{x \in \mathcal{C}} | r_{n}(x)-r(x) |\\
	\leq \frac{1}{\inf_{x \in \mathcal{C}}|\bar{r}_{2,n}(x)|}\{\sup_{x \in \mathcal{C}} \{|\bar{r}_{1,n}(x)-\bar{\tilde{r}}_{1,n}(x)| + | \bar{\tilde{r}}_{1,n}(x)-\mathbb{E}[\bar{\tilde{r}}_{1,n}(x)]|+ | \mathbb{E}[\bar{\tilde{r}}_{1,n}(x)]-\bar{r}_{1}(x)|\} \\ 
	+ \sup_{x \in \mathcal{C}}|r(x)|\{| \bar{\tilde{r}}_{2,n}(x)-\bar{r}_{2,n}(x)|+| \mathbb{E}[\bar{\tilde{r}}_{2,n}(x)]-\bar{\tilde{r}}_{2,n}(x)|+ | \bar{r}_{2}(x)-\mathbb{E}[\bar{\tilde{r}}_{2,n}(x)]|\}\}.
	\end{multline*}
	In the sequel, we give a sequence of lemmas that are helpful in proving our results.
	\begin{lem}\label{lem1}
		Under hypotheses \hyperref[h]{\bf{\textit{H}}}\textit{i)} and \hyperref[k]{\bf{\textit{K}}}, we have, for $ \ell=1,\,2$, that
		\begin{equation}
		\sup_{x\in \mathcal{C}} | \bar{r}_{\ell,n}(x)-\bar{\tilde{r}}_{\ell,n}(x)| = O_{a.s.}\left\{\left(\frac{\log_{2}n}{n}\right)^{1/2}\right\} \quad as \quad n \longrightarrow \infty. 
		\end{equation}
	\end{lem}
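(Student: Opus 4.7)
The plan is to reduce the difference $\bar{r}_{\ell,n}(x)-\bar{\tilde{r}}_{\ell,n}(x)$ to a product of (i) a Kaplan--Meier deviation, which governs the rate, and (ii) an averaged kernel term, which is uniformly bounded almost surely.

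First I would write out the difference explicitly:
\begin{equation*}
\bar{r}_{\ell,n}(x)-\bar{\tilde{r}}_{\ell,n}(x)
=\frac{1}{nh_n}\sum_{i=1}^{n}\delta_i\,Y_i^{-\ell}\,K\!\left(\frac{x-X_i}{h_n}\right)\frac{\overline{G}(Y_i)-\overline{G}_n(Y_i)}{\overline{G}_n(Y_i)\,\overline{G}(Y_i)} .
\end{equation*}
On the event $\{\delta_i=1\}$ we have $Y_i=T_i\le\tau_F$, so both $\overline{G}(Y_i)$ and (almost surely, for $n$ large) $\overline{G}_n(Y_i)$ are bounded below by $\overline{G}(\tau_F)/2>0$, using the uniform convergence of $\overline{G}_n$ on $[0,\tau_F]$ together with $\overline{G}(\tau_F)>0$. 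Using also the bound $Y_i^{-\ell}\le M$ on $\{\delta_i=1\}$ given in the notation, this yields
\begin{equation*}
\sup_{x\in\mathcal{C}}\bigl|\bar{r}_{\ell,n}(x)-\bar{\tilde{r}}_{\ell,n}(x)\bigr|
\le \frac{M}{\overline{G}_n(\tau_F)\overline{G}(\tau_F)}\;\sup_{t\le\tau_F}\bigl|\overline{G}_n(t)-\overline{G}(t)\bigr|\;\sup_{x\in\mathcal{C}}f_n(x),
\end{equation*}
where $f_n$ is the standard kernel density estimator based on $K$ and $h_n$.

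Next I would invoke two classical facts. For the Kaplan--Meier factor, the law of the iterated logarithm (Földes--Rejtő, or Stute--Wang) gives
\begin{equation*}
\sup_{t\le\tau_F}\bigl|\overline{G}_n(t)-\overline{G}(t)\bigr|=O_{a.s.}\!\left(\sqrt{\frac{\log_2 n}{n}}\right).
\end{equation*}
For the kernel factor, under hypothesis \textbf{H}$\,i)$ and \textbf{K}, standard results on uniform a.s. consistency of kernel density estimators (on compact sets where $f$ is bounded) give $\sup_{x\in\mathcal{C}}f_n(x)=O_{a.s.}(1)$. Combined with the lower bound on $\overline{G}_n(\tau_F)\,\overline{G}(\tau_F)$, multiplying the three factors gives exactly the claimed rate.

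The main obstacle, and the only step that is not entirely routine, is justifying the uniform lower bound on $\overline{G}_n(Y_i)$ over indices with $\delta_i=1$: one needs that $\overline{G}_n(\tau_F)$ does not degenerate. This follows from the uniform convergence of the Kaplan--Meier estimator on $[0,\tau_F]$ plus $\overline{G}(\tau_F)>0$ (the assumption $\tau_F<\tau_G$ recalled in the discussion), but it must be written carefully so that the constant absorbing $[\overline{G}_n(\tau_F)\overline{G}(\tau_F)]^{-1}$ is almost surely $O(1)$ uniformly in $n$. Once this is secured, all remaining inequalities are elementary and the bound on the supremum follows immediately.
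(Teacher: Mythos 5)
Your proposal is correct and follows essentially the same route as the paper: the same decomposition isolating the factor $\overline{G}_n(Y_i)-\overline{G}(Y_i)$, the law of the iterated logarithm for the Kaplan--Meier estimator of the censoring distribution to get the $\sqrt{\log_2 n/n}$ rate, and an almost-sure uniform bound on the averaged kernel term. If anything you are a bit more careful than the paper on the two points it glosses over, namely the uniform-in-$x$ control of $\frac{1}{nh_n}\sum_i K\left(\frac{x-X_i}{h_n}\right)$ and the non-degeneracy of $\overline{G}_n(\tau_F)$.
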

	\begin{proof}
		For $\ell=1,\,2$, we have
		\[\begin{aligned}
		| \bar{r}_{\ell,n}(x)-\bar{\tilde{r}}_{\ell,n}(x)| & =\frac{1} {n h_n} \left|\sum_{i=1}^n \frac{ \delta_i Y_i^{-\ell}}{\overline{G}_n(Y_i)}K\left(\frac{x-X_i}{h_n}\right)-\frac{ \delta_i Y_i^{-\ell}}{\overline{G}(Y_i)}K\left(\frac{x-X_i}{h_n}\right)\right|\\
		& = \frac{1}{n h_n}\left|\sum_{i=1}^n\frac{T_i^{-\ell}}{\overline{G}_n(T_i)}\mathds{1}_{\{T_{i}\leq C_{i}\}}K\left(\frac{x-X_i}{h_n}\right)- \frac{T_i^{-\ell}}{\overline{G}_n(T_i)}\mathds{1}_{\{T_{i}\leq C_{i}\}}K\left(\frac{x-X_i}{h_n}\right) \right| \\
		& \leq \frac{1}{n h_n} \sum_{i=1}^{n} \left| {T_{i}^{-\ell} K\left(\frac{x-X_i}{h_n}\right)} \right| \left| \frac{1}{\overline{G}_{n}(T_i)}-\frac{1}{\overline{G}(T_i)} \right| \\
		& \leq \frac{1}{n h_n} \sum_{i=1}^{n} \left| {T_{i}^{-\ell} K\left(\frac{x-X_i}{h_n}\right)} \right| \left| \frac{\overline{G}_{n}(T_i)-\overline{G}(T_i)}{\overline{G}_{n}(T_i) \overline{G}(T_i)} \right| \\
		& \leq \frac{\sup_{t< \tau_F} \mid \overline{G}_{n}(t)-\overline{G}(t) \mid }{\overline{G}_{n}(\tau_F) \overline{G}(\tau_F)}  \frac{M}{n h_n} \sum_{i=1}^{n} \left| K \left(\frac{x-X_i}{h_n}\right) \right|,
		\end{aligned}\]
		then by using the strong law of large numbers (SLLN) and law of iterated logarithm (LIL) on the censoring law (see formula (4.28) in \hyperref[deheuvels]{Deheuvels and Einmahl, 2000}), we get,
		\[\sup_{x\in \mathcal{C}} \mid \bar{r}_{\ell,n}(x)-\bar{\tilde{r}}_{\ell,n}(x)\mid \leq \frac{M}{\overline{G}^{2}(\tau_F)} \mathbb{E} \left(\left| \frac{1}{h_n}K\left(\frac{x-X_1}{h_n}\right)\right| 
		\right) \sqrt {\frac{\log_{2}n}{n}}.\]
		Then hypotheses \hyperref[h]{\bf{\textit{H}}}\textit{i)} and \hyperref[k]{\bf{\textit{K}}} complete the proof of the lemma.
	\end{proof}
	\begin{lem}\label{lem2}
		Under hypotheses \hyperref[h]{\bf{\textit{H}}}\textit{i)}, \hyperref[k]{\bf{\textit{K}}} and \hyperref[d]{\bf{\textit{D}}}\textit{i)}, we have, for $ \ell =1,2$, that:
		\begin{equation}
		\sup_{x\in \mathcal{C}} \mid \mathbb{E}[\bar{\tilde{r}}_{\ell,n}(x)]-\bar{r}_{\ell}(x) \mid = {O}(h_{n}^{2})  \quad as\qquad n \longrightarrow \infty.
		\end{equation}
	\end{lem}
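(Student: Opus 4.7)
The plan is to compute $\mathbb{E}[\bar{\tilde{r}}_{\ell,n}(x)]$ explicitly as a convolution of $\bar r_\ell$ with the scaled kernel, and then use a second–order Taylor expansion together with the symmetry of $K$.

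First I would condition on $(T_i,X_i)$ inside the sum defining $\bar{\tilde{r}}_{\ell,n}(x)$. Using the i.i.d. assumption, the independence stated in \eqref{indep} and exactly the same computation already performed in the introduction, I get
\[
\mathbb{E}\!\left[\frac{\delta_1 Y_1^{-\ell}}{\overline{G}(Y_1)}\,\Big|\,X_1\right]
=\mathbb{E}[T_1^{-\ell}|X_1]=r_\ell(X_1),
\]
so that
\[
\mathbb{E}[\bar{\tilde{r}}_{\ell,n}(x)]
=\frac{1}{h_n}\mathbb{E}\!\left[K\!\left(\frac{x-X_1}{h_n}\right)r_\ell(X_1)\right]
=\int \bar r_\ell(u)\,\frac{1}{h_n}K\!\left(\frac{x-u}{h_n}\right)du.
\]
The change of variable $u=x-h_n t$ turns this into $\int \bar r_\ell(x-h_n t)\,K(t)\,dt$.

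Next, since $K$ is compactly supported by hypothesis \textbf{K}\,$i)$, there is an $A>0$ such that $\mathrm{supp}(K)\subset[-A,A]$. For $x\in\mathcal{C}$ and $n$ large enough (so that $h_n A$ is smaller than the distance from $\mathcal{C}$ to the boundary of $\mathcal{C}_0$), the point $x-h_n t$ stays in an arbitrarily small enlargement of $\mathcal{C}$ on which $\bar r_\ell''$ is still bounded by hypothesis \textbf{D}\,$i)$. A second–order Taylor expansion yields
\[
\bar r_\ell(x-h_n t)=\bar r_\ell(x)-h_n t\,\bar r_\ell{}'(x)+\frac{h_n^{2}t^{2}}{2}\bar r_\ell{}''(\xi_{x,t}),
\]
for some $\xi_{x,t}$ between $x$ and $x-h_n t$. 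Plugging this in and using $\int K(t)\,dt=1$ together with the symmetry condition $\int t\,K(t)\,dt=0$ from \textbf{K}\,$ii)$ kills the constant and the linear term, leaving
\[
\mathbb{E}[\bar{\tilde{r}}_{\ell,n}(x)]-\bar r_\ell(x)
=\frac{h_n^{2}}{2}\int t^{2}\bar r_\ell{}''(\xi_{x,t})\,K(t)\,dt.
\]

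Finally I would take absolute values and the supremum over $x\in\mathcal{C}$. Bounding $|\bar r_\ell{}''(\xi_{x,t})|$ by $\sup_{y\in\mathcal{C}^{\,\varepsilon}}|\bar r_\ell{}''(y)|<\infty$ (where $\mathcal{C}^{\,\varepsilon}$ is the small enlargement mentioned above) and using $\int t^{2}K(t)\,dt<\infty$ from \textbf{K}\,$ii)$, the right–hand side is at most a constant times $h_n^{2}$, uniformly in $x\in\mathcal{C}$. This gives the announced $O(h_n^{2})$ rate.

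The main obstacle, minor but worth stating, is ensuring that the Taylor expansion is valid uniformly on $\mathcal{C}$; this is handled by the compact support of $K$, which confines $\xi_{x,t}$ to a fixed neighbourhood of $\mathcal{C}$ on which \textbf{D}\,$i)$ supplies a uniform bound on $\bar r_\ell''$. All other steps are routine kernel–regression computations.
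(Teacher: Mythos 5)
Your proposal is correct and follows essentially the same route as the paper: the conditional-expectation computation reducing $\mathbb{E}[\delta_1 Y_1^{-\ell}/\overline{G}(Y_1)\,|\,X_1]$ to $r_\ell(X_1)$, the convolution representation, the change of variable, and the second-order Taylor expansion with the linear term killed by $\int tK(t)\,dt=0$ and the quadratic term bounded via \textbf{D}\,$i)$ and $\int t^2K(t)\,dt<\infty$. Your extra care in bounding $\bar r_\ell{}''$ on a small enlargement of $\mathcal{C}$ (so that the intermediate point of the Taylor expansion stays where the bound applies) is a minor refinement the paper glosses over, but it does not change the argument.
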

	\begin{proof}
		Using the conditional expectation properties, we get,
		\[\begin{aligned}
		\mathbb{E}\left[\bar{\tilde{r}}_{\ell,n}(x)\right]& = \frac{1}{h_n} \mathbb{E} \left[ \frac {\delta_{1} Y_{1}^{-\ell}}{\overline{G}(Y_{1})} {K\left(\frac{x-X_{1}}{h_n}\right)}\right]\\
		&=\frac{1}{h_n} \mathbb{E}\left[{K\left(\frac{x-X_{1}}{h_n}\right)}\mathbb{E}\left(\frac {\delta_{1} Y_{1}^{-\ell}}{\overline{G}(Y_{1})}|X_{1}\right)\right]\\
		&= \frac{1}{h_n} \int{K\left(\frac{x-u}{h_n}\right)}\mathbb{E}\left[\frac {\delta_{1} Y_{1}^{-\ell}}{\overline{G}(Y_{1})}|X_{1}=u\right]f(u)du,
		\end{aligned}\]
		and as ${\mathds{1}_{\{T_{1} \leq C_{1} \}}Y_{1}^{-\ell}}={\mathds{1}_{\{T_{1} \leq C_{1} \}}T_{1}^{-\ell}}$ , we get
		\begin{equation}
		\mathbb{E}\left[\frac {T^{-\ell}}{\overline{G}(T_{1})}\mathbb{E}\left[\mathds{1}_{\{T_{1} \leq C_{1} \}}|T_{1}\right]|X_{1}=u\right]=r_{\ell}(u).
		\end{equation}
		By a change of variable and using $\bar{r}_{\ell}(\cdot)=r_{\ell}(\cdot)f(\cdot)$, we get  
		\[\begin{aligned}
		\left| \mathbb{E} \left( \frac{1}{h_n} \frac {\delta_{1}Y^{-\ell}}{\overline{G}(Y)} K\left(\frac{x-X}{h_n}\right)\right)-{\bar{r}_{\ell}(x)}\right| &= \left| \int \frac{1}{h_n}{K\left(\frac{x-u}{h_n}\right)}r_{\ell}(u)f(u)du-{\bar{r}_{\ell}(x)}\right|\\
		&=\left| \int \frac{1}{h_n}{K\left(\frac{x-u}{h_n}\right)}\bar{r}_{\ell}(u)du-{\bar{r}_{\ell}(x)}\right|\\
		&=\left| \int K(t)[\bar{r}_{\ell}(x-h_{n}t)-{\bar{r}_{\ell}(x)}]dt\right|.
		\end{aligned}\]
		We use a Taylor expansion to $\bar{r}_\ell (\cdot)$ for $\zeta \in ]x-h_{n}t,x[$, we get
		\[\begin{aligned}
		\sup_{x\in \mathcal{C}} \left|(\mathbb{E}(\bar{\tilde{r}}_{\ell,n}(x))-\bar{r}_{\ell}(x) \right| & = \sup_{x\in \mathcal{C}} \left| \int K(t)[-h_{n}t \bar{r}_{\ell}^{'}(x)-\frac{h_{n}^{2}t^2}{2} \bar{r}^{''}_{\ell}(\zeta)]dt\right| \\ 
		& \leq h_n \sup_{x\in \mathcal{C}} \left| \int t K(t) \bar{r}_{\ell}^{'}(x) dt \right| + h_{n}^{2} \sup_{x\in \mathcal{C}} \left| \int \frac {t^2}{2} K(t) \bar{r}^{''}_{\ell}(\zeta)dt\right|\\
		& \leq h_n \sup_{x\in \mathcal{C}} \left| \int t K(t) \bar{r}_{\ell}^{'}(x) dt\right| + \frac{h_{n}^{2}}{2} \sup_{x\in \mathcal{C}}\left| \int \bar{r}_{\ell}^{''}(\zeta) t^2 K(t)dt\right|.
		\end{aligned}\]
		Under Hypotheses \hyperref[h]{\bf{\textit{H}}}\textit{i)} and \hyperref[k]{\bf{\textit{K}}}\textit{ii)}, the first term is equal to zero. The second term goes to zero for $n \rightarrow \infty$ from hypotheses \hyperref[d]{\bf{\textit{D}}}\textit{i)} and  \hyperref[k]{\bf{\textit{K}}}\textit{ii)}. The last result complete the proof of the lemma. 
	\end{proof}
	\begin{lem}\label{lem3}
		Under hypotheses \hyperref[h]{\bf{\textit{H}}}\textit{i)} and \hyperref[k]{\bf{\textit{K}}}\textit{i)}, we have, for $ \ell=1,2 $ 
		\begin{equation}
		\sup_{x\in \mathcal{C}} \mid \bar{\tilde{r}}_{\ell,n}(x)-\mathbb{E}[\bar{\tilde{r}}_{\ell,n}(x)]\mid =O_{a.s.}\left(\sqrt{\frac{\log n}{nh_{n}}}\right)  \quad as \quad n \longrightarrow \infty.
		\end{equation}
	\end{lem}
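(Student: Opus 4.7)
My plan is to apply Bernstein's exponential inequality pointwise and then patch the estimate into a uniform one via a covering/Lipschitz argument on the compact $\mathcal{C}$.

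First I would rewrite
\[
\bar{\tilde{r}}_{\ell,n}(x)-\mathbb{E}[\bar{\tilde{r}}_{\ell,n}(x)]=\frac{1}{n}\sum_{i=1}^{n}\bigl(Z_{i}(x)-\mathbb{E}[Z_{i}(x)]\bigr),\qquad Z_{i}(x):=\frac{1}{h_{n}}\,\frac{\delta_{i}Y_{i}^{-\ell}}{\overline{G}(Y_{i})}\,K\!\left(\frac{x-X_{i}}{h_{n}}\right),
\]
and verify two size bounds that Bernstein needs. Using $\overline{G}(Y_{i})\geq \overline{G}(\tau_{F})>0$, the bound $T_{i}^{-\ell}\leq M$, and $\|K\|_{\infty}<\infty$, one gets $\|Z_{i}(x)\|_{\infty}\leq C_{1}/h_{n}$. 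For the variance, a change of variable $u=(x-t)/h_{n}$ together with Hypothesis \hyperref[d]{\textbf{D}}\textit{i)} (which makes $\bar r_\ell$ and hence $\mathbb{E}[Y_1^{-2\ell}\overline{G}^{-1}(Y_1)\mid X_1=x]f(x)$ bounded on the support) gives $\mathrm{Var}(Z_{i}(x))\leq C_{2}/h_{n}$. Bernstein then yields, for every fixed $x$,
\[
\mathbb{P}\!\left(\bigl|\bar{\tilde{r}}_{\ell,n}(x)-\mathbb{E}[\bar{\tilde{r}}_{\ell,n}(x)]\bigr|>\eta\sqrt{\tfrac{\log n}{nh_{n}}}\right)\leq 2\exp\!\left(-\frac{\eta^{2}\log n}{2C_{2}+2\eta C_{1}/3}\right)\leq 2 n^{-c\eta^{2}}
\]
for a constant $c>0$, provided $\eta$ is large enough (the second-moment term dominates since $\eta\sqrt{\log n/(nh_n)}\to 0$ by Hypothesis \hyperref[h]{\textbf{H}}\textit{i)}).

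Next I would upgrade pointwise to uniform. Cover the compact $\mathcal{C}$ by $N_{n}$ balls $B(x_{k},\rho_{n})$, where I will choose $\rho_{n}=h_{n}^{2}\sqrt{\log n/(nh_{n})}$, so $N_{n}=O(\rho_{n}^{-1})$ is polynomial in $n$. By Hypothesis \hyperref[k]{\textbf{K}}\textit{i)} the kernel $K$ is continuously differentiable with compact support, hence globally Lipschitz with some constant $L$, which gives
\[
\sup_{x\in B(x_{k},\rho_{n})}\bigl|Z_{i}(x)-Z_{i}(x_{k})\bigr|\leq \frac{LM}{h_{n}^{2}\overline{G}(\tau_{F})}\,\rho_{n}=O\!\left(\sqrt{\tfrac{\log n}{nh_{n}}}\right),
\]
and the same bound controls the oscillation of $\mathbb{E}[Z_{i}(x)]$. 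Therefore
\[
\sup_{x\in\mathcal{C}}\bigl|\bar{\tilde{r}}_{\ell,n}(x)-\mathbb{E}[\bar{\tilde{r}}_{\ell,n}(x)]\bigr|\leq \max_{1\leq k\leq N_{n}}\bigl|\bar{\tilde{r}}_{\ell,n}(x_{k})-\mathbb{E}[\bar{\tilde{r}}_{\ell,n}(x_{k})]\bigr|+O\!\left(\sqrt{\tfrac{\log n}{nh_{n}}}\right).
\]
A union bound combined with the Bernstein estimate gives $\mathbb{P}(\text{sup}>\eta\sqrt{\log n/(nh_{n})})\leq 2 N_{n}\,n^{-c\eta^{2}}$, which for $\eta$ large enough is summable in $n$; Borel--Cantelli finishes the proof.

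The only delicate step is the calibration of $\rho_{n}$: the covering radius must simultaneously make the Lipschitz discretisation error of order $\sqrt{\log n/(nh_{n})}$ and keep $N_{n}$ only polynomial so that the union bound remains summable after Bernstein. With the choice above this balance is possible precisely because Hypothesis \hyperref[h]{\textbf{H}}\textit{i)} forces $nh_{n}/\log n\to\infty$, which also guarantees that the Bernstein regime is dominated by the variance term rather than the boundedness term.
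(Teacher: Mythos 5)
Your argument is correct, but it takes a genuinely different route from the paper. You prove the uniform bound by the classical discretisation scheme: a pointwise Bernstein inequality, a cover of $\mathcal{C}$ by $N_n$ balls of radius $\rho_n=h_n^2\sqrt{\log n/(nh_n)}$, a Lipschitz bound on the oscillation of $Z_i(\cdot)$ over each ball (available since \textbf{K}\textit{i)} makes $K$ continuously differentiable with compact support), a union bound, and Borel--Cantelli. The calibration is sound: $nh_n\to\infty$ keeps $N_n=O(h_n^{-2}\sqrt{nh_n/\log n})$ polynomial in $n$, so the union bound stays summable once $\eta$ is large. The paper instead treats $\{\theta_x : x\in\mathbb{R}\}$ as a Vapnik--\v{C}ervonenkis class of functions (via Lemma 3b of Gin\'e and Guillou, 1999) and applies Talagrand's concentration inequality for empirical processes (Proposition 2.2 of Gin\'e and Guillou, 2001) with the same envelope $U_n\asymp (nh_n)^{-1}$ and variance proxy $\sigma_n^2\asymp n^{-2}h_n^{-1}$, then concludes by Borel--Cantelli. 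What each buys: your route is elementary and self-contained, at the price of needing the explicit Lipschitz constant of $K$ and a careful choice of $\rho_n$; the VC/Talagrand route dispenses with the covering bookkeeping entirely and transfers verbatim to $\mathbb{R}^d$ and to more general kernel classes, which is why the paper's Remark 1 on the multivariate extension comes for free there. One small caveat common to both proofs: the variance bound ($\mathrm{Var}(Z_i(x))\leq C/h_n$ for you, $\sigma_n^2$ involving $\|f\|_\infty$ for the paper) requires the density $f$ to be bounded, which is not among the hypotheses listed in the lemma; you attribute it to \textbf{D}\textit{i)}, which strictly speaking controls $\bar r_\ell$ rather than $f$, but since the paper makes the same implicit assumption this is not a gap specific to your argument.
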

	\begin{proof}
		Let us consider the i.i.d sequence $(X_{1},Y_{1},\delta_{1}),\dots,(X_{n},Y_{n},\delta_{n})$ and define 
		\[\Phi_{n}= \left\{ \theta_{x} :  \mathbb{R} \times \mathbb{R}_+^* \times \{0,1\}\rightarrow \mathbb{R}^+ / \theta_{x}(u,y,\delta)=\frac{\delta y^{-\ell}}{n h_n \overline{G}(y)} K{\left(\frac{x-u}{h_n}\right)}, \quad x \in \mathbb{R} \right\}.\]
		By Lemma (3b) in \hyperref[gine1]{Gin\'{e} and Guillou (1999)}, $\Phi_n$ is Vapnik-Cervonenkis (V-C) class of no-negative measurable functions. These are uniformly bounded with respective envelopes $\displaystyle \Theta = \frac{M\norm{K}_{\infty}}{n h_n\, 
		\overline{G}(\tau_F)}$. Moreover,
		\[\begin{aligned}
		\sup_{x \in \mathcal{C}}\mathbb{E}\left[\theta_{x}(X_{1},Y_{1},\delta_{1})\right] &\leq \frac{M \norm{K}_{\infty}}{n h_n \overline{G}(\tau_F) }=: U_n
		\end{aligned}\]		
		In the same way, we get
		\[\begin{aligned}
		\sup_{x \in \mathcal{C}}Var\left[\theta_{x}(X_{1},Y_{1},\delta_{1})\right] & \leq \sup_{x \in \mathcal{C}} \mathbb{E}\left[\theta_{x}^{2}(X_{1},Y_{1},\delta_{1})\right] \\
		& \leq \frac{M^2 \norm{K}^{2}_{2} \norm {f}_{\infty}}{n^2 h_{n} \overline{G}^2(\tau_F)} =: \sigma_{n}^{2}
 		\end{aligned}\]
		with $\sigma_n \leq U_{n}$ for $n$ large enough. \\
		Now applying Talagrand's inequality [see Proposition 2.2 in \hyperref[gine]{Gin\'{e} and Guillou (2001)}], with $t \geq A \sqrt{\frac {\log n}{n h_n}}$, there exist two positives constants $L$ and $B$ such that 
		\[\mathbb{P} \left[ \sup_{\theta_{x} \in \Phi_{n}} \left|  \sum_{i=1}^{n}\left(\theta_{x}(X_{i},Y_{i},\delta_{i}) -\mathbb{E}[ \theta_{x}(X_{1},Y_{1},\delta_{1})] \right) \right| > A \sqrt{\frac {\log n}{nh_{n}}}\right]\]
		\[\leq L \exp \left( -\frac{A \sqrt {\frac{\log n}{nh_{n}}}}{L \frac{M \norm{K}_{\infty}}{n h_n \bar{G}(\tau_F)}}\log \left[1+\frac{A \frac {{\sqrt {\log n}} \norm{K}_{\infty}}{(nh_n)^{3/2} \bar{G}(\tau_F)}}{L \left(\frac{ M\norm{K}_{2} \sqrt{\norm{f}_{\infty}}}{\sqrt{nh_n} \overline{G}(\tau_F)}+ \frac{M \norm{K}_{\infty}}{n h_n \bar{G}(\tau_F)} \sqrt {\log{B \frac{\norm{K}_{\infty}}{\sqrt{h_n} \norm{K}_{2}^{2} \sqrt{\norm{f}_{\infty}}}}}\right)^{2} }\right]\right),\]
		and using $\log(1+x) \approx x  $( for $ x \rightarrow 0 )$, the right-hand of the last equation becomes an order of 
		\[L \exp \left( -\frac{A \bar{G}(\tau_F) \sqrt {\frac{\log n}{nh_{n}}}}{L {M \norm{K}_{\infty}}} n h_n \frac{A \frac {{\sqrt {\log n}} \norm{K}_{\infty}}{(nh_n)^{3/2}\bar{G}(\tau_F)}}{L \left(\frac{ M \norm{K}_{2} \sqrt{\norm{f}_{\infty}}}{\sqrt{nh_n} \overline{G}(\tau_F)}\right)^2} \right)= L n^{- \frac{ \overline{G}^2(\tau_F)}{ M^2 \norm{K}_{2}^{2} \norm{f}_{\infty} } \left(\frac{A}{L}\right)^2},\]
		which by an appropriate choice of the constant $A$, can be made $O(n^{-3/2})$. The latter being a general term of summable series and by Borel-Cantelli's lemma we conclude the proof.
	\end{proof}
\noindent 	Then \hyperref[lem1]{Lemma 1}- \hyperref[lem3]{Lemma 3} permit to conclude the proof of the Theorem \ref{theo1}.
\end{proof}
\noindent Next we proceed to the proof of the Theorem \ref{theo2}.
\begin{proof}
	Our goal is to show 
		\[\sqrt{nh_n}(r_{n}(x)-r(x))\xrightarrow{\  \mathcal{D}    \ } \mathcal{N}(0,\sigma^2(x)) \quad \text{as} \quad n \longrightarrow \infty.\]
	Note that, for $\ell=1,2$,
	\[\begin{aligned}
	\sqrt{nh_n}(\bar{r}_{\ell,n}(x)-\bar{r}_{\ell}(x)) &= \sqrt{nh_n}\left(\bar{r}_{\ell,n}(x)-\bar{\tilde{r}}_{\ell,n}(x)\right)+\sqrt{nh_n}\left(\bar{\tilde{r}}_{\ell,n}(x)-\mathbb{E}[\bar{\tilde{r}}_{\ell,n}(x)]\right)\\
	& +\sqrt{nh_n}\left(\mathbb{E}[\bar{\tilde{r}}_{\ell,n}(x)]-\bar{r}_{\ell}(x)\right)\\
	& =:\Lambda_{\ell,n}(x)+\Gamma_{\ell,n}(x)+\Sigma_{\ell,n}(x).
	\end{aligned}\]
First, we consider the negligible terms $\Lambda_{\ell,n}$ and $\Sigma_{\ell,n}$.
\begin{lem}
Under \hyperref[h]{\bf{\textit{H}}}\textit{ii)},\textit{iii)} and by \hyperref[lem1]{Lemma 1}, \hyperref[lem2]{Lemma 2},  both  $\sqrt{nh_n}\Lambda_{\ell,n}(x)$ and  $\sqrt{nh_n}\Sigma_{\ell,n}(x)$ are $o(1)$ as $n \rightarrow \infty$.
\end{lem}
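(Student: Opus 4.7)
The plan is to treat $\Lambda_{\ell,n}(x)$ and $\Sigma_{\ell,n}(x)$ separately, since each admits a direct rate from one of the two lemmas already proved. The strategy is to rewrite each as the factor $\sqrt{nh_n}$ times a quantity whose uniform supremum rate is known, and then verify that this rate, after multiplication by $\sqrt{nh_n}$, is still $o(1)$. Hypotheses \textbf{H} \textit{ii)} and \textit{iii)} on the bandwidth are precisely calibrated for this.

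For $\Lambda_{\ell,n}(x)=\sqrt{nh_n}\bigl(\bar{r}_{\ell,n}(x)-\bar{\tilde r}_{\ell,n}(x)\bigr)$, I would apply Lemma \ref{lem1}, which yields
\[
\sup_{x\in\mathcal{C}}\bigl|\bar{r}_{\ell,n}(x)-\bar{\tilde r}_{\ell,n}(x)\bigr| \;=\; O_{a.s.}\!\left(\sqrt{\tfrac{\log_2 n}{n}}\right).
\]
Multiplying by $\sqrt{nh_n}$ gives a bound of order $\sqrt{h_n\log_2 n}$, which tends to $0$ a.s.\ by Hypothesis \textbf{H} \textit{ii)}. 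Hence $\Lambda_{\ell,n}(x)=o_{a.s.}(1)$ uniformly in $x\in\mathcal{C}$, and a fortiori at the fixed $x$ of Theorem \ref{theo2}.

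For $\Sigma_{\ell,n}(x)=\sqrt{nh_n}\bigl(\mathbb{E}[\bar{\tilde r}_{\ell,n}(x)]-\bar{r}_{\ell}(x)\bigr)$, I would invoke Lemma \ref{lem2}, which gives the deterministic bias bound
\[
\sup_{x\in\mathcal{C}}\bigl|\mathbb{E}[\bar{\tilde r}_{\ell,n}(x)]-\bar{r}_{\ell}(x)\bigr| \;=\; O(h_n^{2}).
\]
Multiplication by $\sqrt{nh_n}$ then yields a bound of order $\sqrt{nh_n^{5}}$, which is $o(1)$ by Hypothesis \textbf{H} \textit{iii)}. Therefore $\Sigma_{\ell,n}(x)=o(1)$.

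I do not anticipate any substantive obstacle: the lemma is essentially a bookkeeping step verifying that the two hypotheses \textbf{H} \textit{ii)} and \textit{iii)} have been chosen to make the Kaplan--Meier substitution error and the kernel bias asymptotically negligible at the $(nh_n)^{-1/2}$-scale of the central limit theorem, so that only the centered stochastic term $\Gamma_{\ell,n}(x)$ contributes to the limit law. The one point to watch is that Lemmas \ref{lem1} and \ref{lem2} are stated with a $\sup_{x\in\mathcal{C}}$, which is stronger than the pointwise statement needed here, so restricting to the fixed $x\in\mathcal{A}\subset\mathcal{C}$ of Theorem \ref{theo2} is immediate.
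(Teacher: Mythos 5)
Your proof is correct and follows exactly the paper's own argument: multiply the rate $O_{a.s.}(\sqrt{\log_2 n/n})$ from Lemma \ref{lem1} by $\sqrt{nh_n}$ to get $O_{a.s.}(\sqrt{h_n\log_2 n})=o_{a.s.}(1)$ under \textbf{H}~\textit{ii)}, and the bias rate $O(h_n^2)$ from Lemma \ref{lem2} by $\sqrt{nh_n}$ to get $O(\sqrt{nh_n^5})=o(1)$ under \textbf{H}~\textit{iii)}. Your closing remark that the uniform bounds of the lemmas trivially imply the pointwise statement needed here is also consistent with how the paper uses them.
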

\begin{proof}
\noindent  From \hyperref[lem1]{Lemma 1}, under \hyperref[h]{\bf{\textit{H}}}\textit{ii)}, we get,
\begin{equation}\label{gamma}
\Lambda_{\ell,n}(x)=\sqrt{nh_n}\left(\bar{r}_{\ell,n}(x)-\bar{\tilde{r}}_{\ell,n}(x)\right)=O_{a.s.}\left(\sqrt{h_{n}\log_{2}n}\right) = o_{a.s.}(1).
\end{equation}
In the same way, from \hyperref[lem2]{Lemma 2}, under \hyperref[h]{\bf{\textit{H}}}\textit{iii)}, we have,
\begin{equation}\label{sigma}
\Sigma_{\ell,n}(x) =\sqrt{n h_n}\left(\mathbb{E}[\bar{\tilde{r}}_{\ell,n}(x)]-\bar{r}_{\ell}(x)\right)=O\left(\sqrt{nh_{n}^{5}}\right)= o(1).
\end{equation}	
\end{proof}
Now we consider the dominant terms $\Gamma_{\ell,n}(x)$ for $\ell \in \{1,2\}$ and prove Lemma \ref{var_cov}.
\begin{lem}\label{var_cov}
Under hypotheses \hyperref[h]{\bf{\textit{H}}}\textit{i)},\hyperref[k]{\bf{\textit{K}}} and \hyperref[d]{\bf{\textit{D}}}\textit{i)}, \textit{ii)}, we have
\begin{equation*}
(\Gamma_{1,n}(x),\Gamma_{2,n}(x))^T\xrightarrow{\  \mathcal{D}    \ }\mathcal{N}(0,\kappa\Sigma(x)) \quad \quad \text{as} \quad n \rightarrow \infty.
\end{equation*}
\end{lem}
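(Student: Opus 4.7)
The plan is to apply the Cram\'er-Wold device, reducing the bivariate CLT to a univariate one. For arbitrary $(a_1, a_2) \in \mathbb{R}^2$, I would write
\[
a_1 \Gamma_{1,n}(x) + a_2 \Gamma_{2,n}(x) = \frac{1}{\sqrt{nh_n}} \sum_{i=1}^n \bigl(W_i - \mathbb{E} W_i\bigr),
\]
with $W_i := \frac{\delta_i (a_1 Y_i^{-1} + a_2 Y_i^{-2})}{\overline{G}(Y_i)} K\!\left(\frac{x-X_i}{h_n}\right)$. Since the $W_i$ form a row-wise i.i.d.\ triangular array, Lyapunov's CLT applies provided the limit variance is identified and a third-moment condition is verified.

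Next, I would compute the limit variance. Conditioning on $(T_1, X_1)$ and using $\mathbb{E}[\mathds{1}_{\{T_1\leq C_1\}}|T_1]=\overline{G}(T_1)$ (exactly as in the preamble that led to $\mathbb{E}[T_1^{\ast,-\ell}|X_1]=\mathbb{E}[T_1^{-\ell}|X_1]$), the second moment collapses to
\[
\mathbb{E}|W_1|^2 = \mathbb{E}\!\left[\frac{(a_1 T_1^{-1}+a_2 T_1^{-2})^2}{\overline{G}(T_1)} K^2\!\left(\frac{x-X_1}{h_n}\right)\right].
\]
Expanding the square, performing the change of variable $u=(x-X_1)/h_n$, and using continuity of $\Upsilon_2, \Upsilon_3, \Upsilon_4$ from hypothesis \hyperref[d]{\textbf{\textit{D}}}\textit{ii)} together with the compact support of $K$, dominated convergence yields
\[
\frac{1}{h_n}\mathbb{E}|W_1|^2 \longrightarrow \kappa\bigl(a_1^2 \Upsilon_2(x) + 2 a_1 a_2 \Upsilon_3(x) + a_2^2 \Upsilon_4(x)\bigr) = (a_1,a_2)\,\kappa\Sigma(x)\,(a_1,a_2)^T.
\]
The bias contribution $(\mathbb{E} W_1)^2/h_n = O(h_n)$ is negligible by \hyperref[lem2]{Lemma~\ref{lem2}} and \hyperref[h]{\textbf{\textit{H}}}\textit{i)}.

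Finally, I would check Lyapunov's condition at order $3$. The bounds $T^{-\ell}\leq M$, $\overline{G}(T)\geq \overline{G}(\tau_F)>0$, and the compact support of $K$ give $\mathbb{E}|W_1|^3 = O(h_n)$ by the same change-of-variable argument, so
\[
\frac{1}{(nh_n)^{3/2}} \sum_{i=1}^n \mathbb{E}|W_i-\mathbb{E} W_i|^3 = O\!\left(\frac{1}{\sqrt{nh_n}}\right) = o(1)
\]
under \hyperref[h]{\textbf{\textit{H}}}\textit{i)}. Combining the variance limit with Lyapunov's condition gives $a_1\Gamma_{1,n}(x)+a_2\Gamma_{2,n}(x) \xrightarrow{\mathcal{D}} \mathcal{N}(0,(a_1,a_2)\,\kappa\Sigma(x)\,(a_1,a_2)^T)$, and Cram\'er-Wold promotes this to the joint convergence. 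The main obstacle I anticipate is not conceptual — the route is standard — but careful bookkeeping: cleanly extracting the cross term that produces $\Upsilon_3(x)$ from the mixed product $2a_1a_2 T_1^{-3}$, and rigorously justifying the interchange of limit and integral after the change of variables via continuity of $\Upsilon_\ell$ and compactness of $\mathrm{supp}(K)$.
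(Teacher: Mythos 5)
Your proposal is correct and follows essentially the same route as the paper: identify the limiting covariance structure by conditioning on $(T_1,X_1)$ and a change of variables, verify a third-moment (Lyapunov/Berry--Ess\`een) condition for the linear combination, and conclude by the Cram\'er--Wold device. The only cosmetic difference is that you compute the variance of the linear combination directly, which folds the variance and covariance calculations the paper does separately into a single expansion of $(a_1 T^{-1}+a_2 T^{-2})^2$; your bound $\rho_n^3=O\bigl((nh_n)^{-1/2}\bigr)$ for the third-moment term is also the correct order.
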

\begin{proof}
	We first estimate the asymptotic variance, for $\ell=1,2$, we get
	\begin{equation*}
	\begin{aligned}
	Var\left[\Gamma_{\ell,n}(x)\right]&=n h_n \left\{ \mathbb{E} \left[ \bar{\tilde{r}}_{\ell,n}^{2}(x) \right] - \mathbb{E}^{2} \left[\bar{\tilde{r}}_{\ell,n}(x)\right]\right\}\\ 
	&=h_{n}^{-1} \left\{ \mathbb{E} \left[ \frac{\delta_1 Y_1^{-2\ell}}{\overline{G}^{2}(Y_1)} K^{2}\left(\frac{x-X_1}{h_n}\right)  \right] - \mathbb{E}^{2} \left[ \frac{\delta_1 Y_1^{-\ell}}{\overline{G}(Y_1)} K\left(\frac{x-X_1}{h_n}\right)\right] \right\}\\
	&=\mathcal{V}_{1}-\mathcal{V}_{2}.
	\end{aligned}
	\end{equation*}
	For $\mathcal{V}_{2}$ proceeding as in \hyperref[lem2]{Lemma 2} and under \hyperref[h]{\bf{\textit{H}}}\textit{i)},\hyperref[k]{\bf{\textit{K}}} and \hyperref[d]{\bf{\textit{D}}}\textit{i)}, we have
	\begin{equation}\label{V1}
	\mathcal{V}_{2}= h_{n}^{-1} \left[h_{n}\int K(s)\bar{r}_{\ell}(x-h_{n}s)ds\right]^{2} = o(1).
	\end{equation}
	Furthermore for $\mathcal{V}_{1}$, we get, 
	\begin{equation*}
	\begin{aligned}
	\mathcal{V}_{1}= h_{n}^{-1} \mathbb{E} \left[ \frac{\delta_1 Y_1^{-2\ell}}{\overline{G}^{2}(Y_1)} K^{2}\left(\frac{x-X_1}{h_n}\right) \right]& = h_{n}^{-1} \mathbb{E} \left[ K^{2}\left(\frac{x-X_1}{h_n}\right)\mathbb{E} \left[ \frac{ T_1^{-2\ell}}{\overline{G}(T_1)} |X_1 \right] \right]\\
	&= h_{n}^{-1} \int K^{2}\left(\frac{x-u}{h_n}\right) \int \frac{t^{-2\ell}}{\overline{G}(t)} f_{T_1|X_1}(t|u)dt f(u)du\\
	&= h_{n}^{-1} \int K^{2}\left(\frac{x-u}{h_n}\right) \int \frac{t^{-2\ell}}{\overline{G}(t)} f_{T_1,X_1}(t,u)dtdu\\
	&= h_{n}^{-1} \int K^{2}\left(\frac{x-u}{h_n}\right) \Upsilon_{2\ell}(u)du
	\end{aligned}
	\end{equation*}
	by a change of variable and Taylor expansion, by \hyperref[k]{\bf{\textit{K}}}\textit{iii)}, we get, 
	\begin{equation}\label{V2}
	\begin{aligned}
	&= h_{n}^{-1} \int K^{2}(t) \Upsilon_{2\ell}(x-h_{n}t) h_{n}dt\\
	& = \Upsilon_{2\ell}(x) \int K^{2}(t) dt + o(h_n)\\ 
	& = \Upsilon_{2\ell}(x)\kappa+o(h_n)
	\end{aligned}
	\end{equation}
which together with (\ref{V1}) and (\ref{V2}) gives under \hyperref[d]{\bf{\textit{D}}}\textit{ii)}, for $\ell=1,2$ 
\begin{equation*}
Var\left[\Gamma_{\ell,n}(x)\right] \longrightarrow \Upsilon_{2\ell}(x)  \int K^{2}(t) dt  \quad \quad \text{as} \quad n\rightarrow \infty.
\end{equation*}
In addition under \hyperref[d]{\bf{\textit{D}}}\textit{ii)} and \hyperref[k]{\bf{\textit{K}}}\textit{iii)}, we get easily
\begin{equation*}
\begin{aligned}
Cov(\Gamma_{1,n}(x),\Gamma_{2,n}(x))
&=\mathbb{E}[\Gamma_{1,n}(x)\Gamma_{2,n}(x)]-\mathbb{E}[\Gamma_{1,n}(x)]\mathbb{E}[\Gamma_{2,n}(x)]\\
&=h_n^{-1}\left\{ \mathbb{E}\left[\frac{\delta_{1}Y_1^{-3}}{\overline{G}^2(Y_1)}K^2\left(\frac{x-X_i}{h_n}\right)\right]-\mathbb{E}\left[\frac{\delta_{1} Y_{1}^{-1}}{\overline{G}(Y_1)} K\left(\frac{x-X_1}{h_n}\right)\right]\mathbb{E}\left[\frac{\delta_{1} Y_{1}^{-2}}{\overline{G}(Y_1)} K\left(\frac{x-X_1}{h_n}\right)\right]\right\}\\
&=\Upsilon_{3}(x)\kappa+o(1).
\end{aligned}
\end{equation*}
Next, we will show that any linear combinations are asymptotically gaussian. For let $(z_1, z_2)^t$ be a real numbers, we put
\begin{equation}\label{berry}
\Delta_n(x)=\sum_{\ell=1}^{2} z_\ell \Gamma_{\ell,n}(x)=:\sum_{i=1}^{n}\Big( z_1\Delta^1_{i,n}(x)+z_2\Delta^2_{i,n}(x)\Big)
\end{equation} 
where 
\[\Delta^\ell_{i,n}(x):= (n h_n)^{-1/2} \left\{ \frac{\delta_{i} Y_{i}^{-\ell}}{\overline{G}(Y_i)} K\left(\frac{x-X_i}{h_n}\right) - \mathbb{E}\left[ \frac{\delta_{i} Y_{i}^{-\ell}}{\overline{G}(Y_i)} K\left(\frac{x-X_i}{h_n}\right) \right] \right\}\; \textrm{for}\; \ell=1,2.\]  
Now in order to show that (\ref{berry}) is asymptotically normal we verify the Berry-Ess\`een condition (\hyperref[chow]{Chow and Teicher (1997), p. 322}). For that we need to prove : 
\begin{equation}\label{prop}
\rho^{ 3}_{n}:=\sum_{i=1}^{n}\mathbb{E}\left[\left|  \Delta^\ell_{i,n}(x)  \right|^{3}\right] \longrightarrow 0 
\end{equation}
with 
\[\mathbb{E}\left[\left|  \Delta\ell_{i,n}(x)  \right|^{3}\right] = \Big(\frac{h_n}{n}\Big)^{3/2} \mathbb{E}\Big[ \left|  \frac{\delta_{i} Y_{i}^{-\ell}}{h_n\overline{G}(Y_i)} K\left(\frac{x-X_i}{h_n}\right) - \mathbb{E}\left[ \frac{\delta_{i} Y_{i}^{-\ell}}{h_n\overline{G}(Y_i)} K\left(\frac{x-X_i}{h_n}\right) \right]\right|^3\Big].\]
Applying the $C_{r}$ inequality (see \hyperref[loeve] {Lo\`eve (1963), p. 155}), we get 
\[\begin{aligned}
\mathbb{E}\left[\left|  \Delta^\ell_{i,n}(x)  \right|^{3}\right]& \leq 4 \Big(\frac{h_n}{n}\Big)^{3/2} \left\{ \mathbb{E}\left[ \frac{\delta_{i} |Y_{i}|^{-3\ell}}{h_n^3\overline{G}^{3}(Y_i)} K^{3}\left(\frac{x-X_i}{h_n}\right) \right] +\left|\mathbb{E}\left[ \frac{\delta_{i} Y_{i}^{-\ell}}{h_n\overline{G}(Y_i)} K\left(\frac{x-X_i}{h_n}\right) \right]\right|^3 \right\}\\ 
& \leq 4 \Big(\frac{h_n}{n}\Big)^{3/2}\left\{ \mathcal{M}_{1}+\mathcal{M}_{2} \right\}.
\end{aligned}\]
and as $\mathcal{M}_{1}$ and $\mathcal{M}_{2}$ are bounded under \hyperref[k]{\bf{\textit{K}}}  which gives that $\rho^{ 3}_{n}=O\displaystyle \Big(\Big(\frac{h_n^{3}}{n}\Big)^{1/2}\Big)=o(1)$. Now, under \hyperref[h]{\bf{\textit{H}}}\textit{i)} the property (\ref{prop}) is satisfied which proves the asymptotic normality of $\Gamma_{\ell,n}(x)$  and, together with (\ref{gamma}) and (\ref{sigma}), complete the proof of Lemma \ref{var_cov}.\\
\end{proof}
Now to complete the proof of Theorem \ref{theo2}, consider the mapping $\theta$ from $\mathbb{R}\times\mathbb{R}^*_+$ to $\mathbb{R}$ defined by $\theta(x,y)=x/y$. We deduce from Mann-Wald's Theorem (see \hyperref[rao] {Rao 1965, p. 321}) that:
\[ \sqrt{nh_n}(r_n(x)-r(x)) \xrightarrow{\  \mathcal{D}    \ } \mathcal{N}(0,\kappa \nabla\theta^T\Sigma(x) \nabla\theta)\]
where the gradient $\nabla\theta^T=\displaystyle\left(\frac{\partial\theta}{\partial x},\frac{\partial\theta}{\partial y}\right)$ is evaluated at $(\bar{r}_1(x),\bar{r}_2(x))$. Simple algebra gives then the variance
\[\sigma^2(x)=\kappa\frac{\Upsilon_2(x)\bar{r}_2^2(x)-2\Upsilon_3(x)\bar{r}_1(x)\bar{r}_2(x)+\Upsilon_4(x)\bar{r}_1^2(x)}{\bar{r}_2^4(x)},\] which completes the proof.
\end{proof}
\addcontentsline{toc}{chapter}{References}
\bibliographystyle{alpha}

\end{document}